\numberwithin{equation}{section}
\newtheorem{theorem}{Theorem}[section]
\newtheorem{corollary}[theorem]{Corollary}
\newtheorem{lemma}[theorem]{Lemma}
\newtheorem{conjecture}[theorem]{Conjecture}
\newtheorem{observation}[theorem]{Observation}
\newtheorem{remark}[theorem]{Remark}
\definecolor{2purple}{RGB}{204,102,255}
\definecolor{3green}{RGB}{0,204,0}
\newtheorem{defn}[theorem]{Definition}
\theoremstyle{definition}
\newcommand{\II}{{\mathbf {I}}}
\newcommand{\gr}{{\mathrm {gr}}}
\newcommand{\lis}{{\mathrm {lis}}}
\newcommand{\exc}{{\mathrm {exc}}}
\newcommand{\inv}{{\mathrm {inv}}}
\newcommand{\peak}{{\mathrm {peak}}}
\newcommand{\triv}{{\mathrm {triv}}}
\newcommand{\SYT}{{\mathrm {SYT}}}
\newcommand{\Loc}{{\mathrm {Loc}}}
\newcommand{\End}{{\mathrm {End}}}
\newcommand{\spn}{{\mathrm {span}}}
\newcommand{\Res}{{\mathrm {Res}}}
\newcommand{\Class}{{\mathrm {Class}}}
\newcommand{\sign}{{\mathrm {sign}}}
\newcommand{\Hilb}{{\mathrm {Hilb}}}
\newcommand{\symm}{{\mathfrak {S}}}
\newcommand{\sss}{{\mathfrak {s}}}
\newcommand{\SSS}{{\mathcal {S}}}
\newcommand{\BBB}{{\mathcal {B}}}
\newcommand{\RRR}{{\mathcal {R}}}
\newcommand{\FFF}{{\mathbb {F}}}
\newcommand{\CC}{{\mathbb {C}}}
\newcommand{\ZZ}{{\mathbb {Z}}}
\newcommand{\QQ}{{\mathbb {Q}}}
\newcommand{\initial}{{\mathrm {in}}}
\newcommand{\xx}{{\mathbf {x}}}
\newcommand{\vv}{{\mathbf {v}}}
\newcommand{\AAA}{{\mathbf {A}}}
\newcommand{\III}{{\mathcal {I}}}
\newcommand{\one}{{\mathbf {1}}}
\newcommand{\zero}{{\mathbf {0}}}
\newcommand{\zz}{{\mathbf {z}}}
\newcommand{\EE}{{\mathbf {E}}}
\newcommand{\Top}{{\mathrm {Top}}}
\begin{document}

\title[Increasing subsequences, matrix loci, and Viennot shadows]
{Increasing subsequences, matrix loci, and Viennot shadows}



\author{Brendon Rhoades}
\address
{Department of Mathematics \newline \indent
University of California, San Diego \newline \indent
La Jolla, CA, 92093-0112, USA}
\email{bprhoades@ucsd.edu}

\begin{abstract}
Let $\xx_{n \times n}$ be an $n \times n$ matrix of variables and let $\FFF[\xx_{n \times n}]$ be the polynomial ring in these variables over a field $\FFF$.
We study the ideal $I_n \subseteq \FFF[\xx_{n \times n}]$ generated by all row and column variable sums and all products of two variables drawn from the same 
row or column.  We show that the quotient $\FFF[\xx_{n \times n}]/I_n$ admits a standard monomial basis determined by Viennot's shadow line avatar of the Schensted 
correspondence. As a corollary, the Hilbert series of $\FFF[\xx_{n \times n}]/I_n$ is the generating function of permutations in $\symm_n$ by 
the length of their longest increasing subsequence.
Along the way, we describe a `shadow junta' basis of the vector space of $k$-local permutation statistics.
We also calculate the structure of $\FFF[\xx_{n \times n}]/I_n$ as a graded $\symm_n \times \symm_n$-module.
\end{abstract}

\keywords{rook placement, Schensted correspondence, Viennot shadow lines, ideal}
\maketitle

\section{Introduction}
\label{Introduction}

Let $\xx$ be a finite set of variables and let $\FFF[\xx]$ be the polynomial ring in these variables over 
a field $\FFF$. If $I \subseteq \FFF[\xx]$ is a homogeneous ideal, the quotient ring $\FFF[\xx]/I$ has the structure of a graded vector space.
The {\em Hilbert series} of $\FFF[\xx]/I$ is the graded dimension of the vector space, viz.
\begin{equation}
\Hilb( \FFF[\xx]/I; q) := \sum_{d \geq 0}  \dim_{\FFF} (\FFF[\xx]/I)_d \cdot q^d.
\end{equation}
Macaulay \cite{Macaulay} characterized the polynomials
$a_0 + a_1 \cdot q + \cdots + a_d \cdot q^d$ with positive integer coefficients which arise as the Hilbert series of a graded quotient of the form $\FFF[\xx]/I$. 
Following the exposition of Stanley \cite[Thm. 1.3]{Stanley}, for positive integers $a$ and $i$, there is a unique representation $a = {b_i \choose i} + {b_{i-1} \choose i-1} + \cdots + {b_j \choose j}$ where $b_i > b_{i-1} > \cdots > b_j \geq j \geq 1$. Let $a^{\langle i \rangle} := {b_i + 1 \choose i+1} + \cdots + {b_j + 1 \choose j +1}$. Then $a_0 + a_1 \cdot q + \cdots + a_d \cdot q^d$ is the Hilbert series of some graded quotient $\FFF[\xx]/I$ if and only if $a_{i+1} \leq a_i^{\langle i+1 \rangle}$ for $0 \leq i \leq d-1$.

In this paper we show that a generating function arising from increasing subsequences of permutations is the Hilbert series of a natural graded ring.
Write $\symm_n$ for the symmetric group on $[n] := \{1, \dots, n \}$. If $w \in \symm_n$ is a permutation, an {\em increasing subsequence} in $w$ is a set 
of positions $1 \leq i_1 < \cdots < i_k \leq n$ whose images under $w$ satisfy $w(i_1) < \cdots < w(i_k)$. The integer $k$ is the {\em length} of this increasing subsequence.
We write
\begin{equation}
\lis(w) := \max \{ k \,:\, \text{$w$ has an increasing subsequence of length $k$} \}
\end{equation}
for the length of the longest increasing subsequence of $w$ and
\begin{equation}
a_{n,k} := | \{ w \in \symm_n \,:\, \lis(w) = k \} |
\end{equation}
for the number of permutations in $\symm_n$ whose longest increasing subsequence has length $k$.
For any positive integer $n$, the sequence $(a_{n,1}, a_{n,2}, \dots, a_{n,n})$ was conjectured by Chen \cite[Conj. 1.1]{Chen} to be log-concave, i.e.
$a_{n,i}^2 \geq a_{n,i-1} \cdot a_{n,i+1}$ for all $1 < i < n$. When $n = 4$, this sequence reads $(a_{4,1},a_{4,2},a_{4,3},a_{4,4}) = (1,13,9,1)$.

The following  ideal $I_n$ is our object of study.
Despite the simplicity of its generating set, it will turn out to have deep connections to the combinatorics of increasing subsequences.

\begin{defn}
\label{i-defn}
Let $\xx_{n \times n}$ be an $n \times n$ matrix of variables $(x_{i,j})_{1 \leq i , j \leq n}$ and consider the polynomial ring $\FFF[\xx_{n \times n}]$ over these variables.
Let $I_n \subseteq \FFF[\xx_{n \times n}]$ be the ideal generated by
\begin{itemize}
\item any product $x_{i,j} \cdot x_{i,j'}$ for $1 \leq i \leq n$ and $1 \leq j, j' \leq n$ of variables in the same row,
\item any product $x_{i,j} \cdot x_{i',j}$ for $1 \leq i, i' \leq n$ and $1 \leq j \leq n$ of variables in the same column,
\item any row sum $x_{i,1} + \cdots + x_{i,n}$ for $1 \leq i \leq n$, and
\item and column sum $x_{1,j} + \cdots + x_{n,j}$ for $1 \leq j \leq n$.
\end{itemize}
\end{defn}

The ideal $I_n \subseteq \FFF[\xx_{n \times n}]$ is homogeneous, so  $\FFF[\xx_{n \times n}]/I_n$ is a graded $\FFF$-algebra.
The natural action of the group $\symm_n \times \symm_n$ on the variable matrix $\xx_{n \times n}$ given by independent row and column permutation
induces an action on $\FFF[\xx_{n \times n}]$ which stabilizes $I_n$, so that $\FFF[\xx_{n \times n}]/I_n$ is a graded $\symm_n \times \symm_n$-module.

When $n = 1$ we have $I_1 = (x_{1,1}) \subseteq \FFF[\xx_{1 \times 1}]$ so that $\FFF[\xx_{1 \times 1}]/I_1 = \FFF$.
When $n = 2$ the ideal $I_2 \subseteq \FFF[\xx_{2 \times 2}]$ has generators
\begin{multline*}
x_{1,1}^2,  \, \,x_{1,2}^2, \, \, x_{2,1}^2,  \, \, x_{2,2}^2,  \, \, x_{1,1} x_{1,2}, \, \, x_{1,1} x_{2,1}, \, \, x_{1,2} x_{2,2}, \, \, x_{2,1} x_{2,2}, \\ x_{1,1} + x_{1,2},  \, \, x_{1,1} + x_{2,1}, \, \,
 x_{1,2} + x_{2,2}, \, \, x_{2,1} + x_{2,2}
\end{multline*}
and it is not hard to check that $\FFF[\xx_{2 \times 2}]/I_2$ has Hilbert series $1 + q$ and that the set of monomials $\{1, x_{1,2} \}$ descends to a basis.

We  prove (Corollary~\ref{hilbert-series}) that the Hilbert series of $\FFF[\xx_{n \times n}]/I_n$ is given by 
\begin{equation}
\Hilb( \FFF[\xx_{n \times n}] / I_n; q) = a_{n,n} + a_{n,n-1} \cdot q + a_{n,n-2} \cdot q^2  + \cdots + a_{n,1} \cdot q^{n-1}
\end{equation}
so that the (reversal of the) generating function for permutations in $\symm_n$ by longest increasing subsequence is the Hilbert series of 
$\FFF[\xx_{n \times n}] / I_n$. 
In particular, the polynomial $a_{n,n} + a_{n,n-1} \cdot q + a_{n,n-2} \cdot q^2  + \cdots + a_{n,1} \cdot q^{n-1}$ satisfies Macaulay's Criterion, a fact which seems difficult to prove directly from the combinatorics of increasing subsequences.
Taking $q \to 1$, the ungraded vector space $\FFF[\xx_{n \times n}] / I_n$ has dimension
\begin{equation}
\dim \FFF[\xx_{n \times n}] / I_n = n!.
\end{equation}
Furthermore, we will attach (Definition~\ref{shadow-defn})
 a monomial $\sss(w)$ in the variables $x_{i,j}$ to any permutation $w \in \symm_n$ such that 
\begin{equation}
\deg \sss(w) = n - \lis(w)
\end{equation}
and prove (Theorem~\ref{vector-space-structure}) that 
\begin{equation}
 \{ \sss(w) \,:\, w \in \symm_n \}
\end{equation}
descends to a vector space basis of $\FFF[\xx_{n \times n}] / I_n$.
In fact, this will be the standard monomial basis of  $\FFF[\xx_{n \times n}] / I_n$ with respect to a ``Toeplitz term order"  $<_\Top$
(Definition~\ref{toeplitz-defn}).
The notation $\sss$ refers to the use of Viennot's {\em shadow line} formulation \cite{Viennot} of the Schensted correspondence in the definition of $\sss(w)$.
Our results may be interpreted as the ideal $I_n \subseteq \FFF[\xx_{n \times n}]$ together with the term order $<_\Top$ ``seeing" the Viennot shadow line construction.

When the field $\FFF$ has characteristic zero or  characteristic $p > n$, we characterize the structure of $\FFF[\xx_{n \times n}]/I_n$ as an
ungraded (Corollary~\ref{ungraded-module-structure}) and graded (Theorem~\ref{graded-module-structure}) module over the product group
$\symm_n \times \symm_n$.
The module structure of $\FFF[\xx_{n \times n}]/I_n$ relates to a family of $\symm_n$-characters considered by Novak and the author \cite{NR}
in a strengthening of Chen's log-concavity conjecture.

For $1 \leq k \leq n$, define a character $\alpha_{n,k}: \symm_n \rightarrow \FFF$ by the rule
\begin{equation}
\alpha_{n,k} := \sum_{\substack{\lambda \, \vdash \, n \\ \lambda_1 \, = \, k}} f^{\lambda} \cdot \chi^{\lambda}
\end{equation}
where the sum is over partitions of $n$ whose first row has length $k$. Here $\chi^{\lambda}: \symm_n \rightarrow \FFF$ is the irreducible character
of $\symm_n$ attached to the partition $\lambda$ and $f^\lambda = \chi^\lambda(e)$ is the dimension of the irreducible $\symm_n$-module attached to $\lambda$.
We have $\alpha_{n,k}(e) = a_{n,k}$, so the sequence $(\alpha_{n,1}, \dots, \alpha_{n,n})$ of class functions is a representation-theoretic refinement of the sequence
$(a_{n,1}, \dots, a_{n,n})$ appearing in Chen's conjecture.

Novak and the author conjectured \cite[Conj. 2]{NR} the the difference $\alpha_{n,k} * \alpha_{n,k} - \alpha_{n,k-1} * \alpha_{n,k+1}$ is 
a genuine (rather than merely virtual) character of $\symm_n$ for all $1 < k < n$, where $*$ denotes the {\em Kronecker product} of class functions on $\symm_n$.
Since $\alpha_{n,k}(e) = a_{n,k}$, this would imply Chen's conjecture.
One way to prove this stronger conjecture would be to describe an $\symm_n$-module which has $\alpha_{n,k} * \alpha_{n,k} - \alpha_{n,k-1} * \alpha_{n,k+1}$ 
as its character.
We prove (Corollary~\ref{novak-rhoades-corollary}) that $\alpha_{n,k}$ is the character of the degree $n-k$ piece of the 
quotient  $\FFF[\xx_{n \times n}]/I_n$, restricted from the product  $\symm_n \times \symm_n$ to either factor of $\symm_n$.
To the author's knowledge, this is the simplest explicit module with character $\alpha_{n,k}$.
We hope that this representation-theoretic model for $\alpha_{n,k}$ can give new insight on the Novak-Rhoades conjecture.
In fact, it appears that a stronger equivariant log-concavity result holds without restriction from $\symm_n \times \symm_n$ to one of its factors;
see Conjecture~\ref{equivariant-conjecture}.

Our results have application to permutation statistics.
For $k \geq 0$, a statistic $f: \symm_n \rightarrow \FFF$ 
is {\em $k$-local} \cite{DFLLV, HR} if $f$ is an $\FFF$-linear combination
of indicator statistics which detect whether a permutation $w$ carries a given list of $k$ positions onto another given list of $k$ values.
The locality of a permutation statistic is a measure of its complexity; for example, the 0-local statistics are precisely the constant functions $\symm_n \rightarrow \FFF$.
While the vector space of $k$-local statistics is defined via a spanning set, finding an explicit basis for this vector space was an open problem in \cite{HR}.
Our Gr\"obner-theoretic methods  yield (Theorem~\ref{local-basis-theorem}) a solution to this problem.

To prove our results, we apply the method of orbit harmonics to the locus $P_n \subseteq \FFF^{n \times n}$ of permutation matrices inside the affine space
$\FFF^{n \times n}$ of $n \times n$ matrices over $\FFF$.
Orbit harmonics is a general method of transforming finite point loci $Z \subseteq \FFF^N$ into graded quotients $\FFF[\xx_N]/\gr \, \II(Z)$
of the polynomial ring $\FFF[\xx_N]$.
This method dates back to at least the work of Kostant \cite{Kostant} 
and has been used to study modules arising in Macdonald theory \cite{GP, Griffin, HRS},
understand cyclic sieving results \cite{OR}, 
and interpret Donaldson-Thomas invariants of symmetric quivers as orbit enumerations in the lattice points of break divisor polytopes \cite{RRT}.

The rest of the paper is organized as follows. 
In {\bf Section~\ref{Background}} we give background material on Gr\"obner bases, orbit harmonics, and the Schensted correspondence.
In {\bf Section~\ref{Hilbert}} we use Viennot's shadow line interpretation of the Schensted correspondence to find a monomial
basis of $\FFF[\xx_{n \times n}]/I_n$ indexed by permutations in $\symm_n$.
We also give a basis for the space of $k$-local permutation statistics.
In {\bf Section~\ref{Module}} we describe the structure of $\FFF[\xx_{n \times n}]/I_n$ as a module over the product group $\symm_n \times \symm_n$.
We close in {\bf Section~\ref{Conclusion}} with directions for future research.

\section{Background}
\label{Background}

\subsection{Gr\"obner theory} Let $\xx = (x_1, \dots, x_N)$ be a finite list of variables and let $\FFF[\xx_N]$ be the polynomial ring in these variables over a field $\FFF$.
A total order $<$ on the monomials in $\FFF[\xx_N]$ is a {\em term order} if 
\begin{itemize}
\item we have $1 \leq m$ for all monomials $m$, and
\item if $m_1, m_2, m_3$ are monomials with $m_1 \leq m_2$, then $m_1 m_3 \leq m_2 m_3$.
\end{itemize}
If $f \in \FFF[\xx_N]$ is a nonzero polynomial and $<$ is a term order, write $\initial_< (f)$ for the largest monomial with respect to $<$ which appears with nonzero coefficient in $f$.

Let $I \subseteq \FFF[\xx_N]$ be an ideal and let $<$ be a term order. 
The {\em initial ideal} $\initial_< (I) \subseteq \FFF[\xx_N]$ associated to $I$ is given by
\begin{equation}
\initial_< (I) := \langle \initial_< (f) \,:\, f \in I, \, \, f \neq 0 \rangle \subseteq \FFF[\xx_N].
\end{equation}
In other words, the ideal $\initial_< (I)$ is generated by the $<$-leading monomials of all nonzero polynomials in $I$.  A subset $G = \{ g_1, \dots, g_r \} \subseteq I$ 
is a {\em Gr\"obner basis} of $I$ if
\begin{equation}
\initial_< (I) = \langle \initial_< (g_1), \dots, \initial_< (g_r) \rangle.
\end{equation}
If $G = \{ g_1, \dots, g_r \}$ is a Gr\"obner basis of $I$ it follows that $I = \langle g_1, \dots, g_r \rangle$.

Given an ideal $I \subseteq \FFF[\xx_N]$ and a term order $<$, a monomial $m$ in the variables $\xx_N$ is a {\em standard monomial} if 
$m \neq \initial_< (f)$ for any nonzero $f \in I$. It is known that the family of cosets 
\begin{equation}
 \{ m + I \,:\, \text{$m$ a standard monomial} \}
\end{equation}
descends to a vector space basis of $\FFF[\xx_N]/I$.  This is referred to as the {\em standard monomial basis}.

\subsection{Orbit harmonics}
Let $Z \subseteq \FFF^N$ be a finite locus of points and consider the ideal
\begin{equation}
\II(Z) := \{ f \in \FFF[\xx_N] \,:\, f(\zz) = 0 \text{ for all } \zz \in Z \}
\end{equation}
of polynomials in $\FFF[\xx_N]$ which vanish on $Z$. 
The ideal $\II(Z)$ is usually not homogeneous.
Since $Z$ is finite, we have an identification
\begin{equation}
\label{short-chain}
\FFF[Z] \cong \FFF[\xx_N]/\II(Z)
\end{equation}
of the vector space $\FFF[Z]$ of functions $Z \rightarrow \FFF$ and the typically ungraded
quotient space $\FFF[\xx_N]/\II(Z)$.

Given a nonzero polynomial $f \in \FFF[\xx_N]$, let $\tau(f)$ be the highest degree homogeneous component of $f$.
That is, if $f = f_d + \cdots +  f_1 + f_0$ where $f_i$ is homogeneous of degree $i$ and $f_d \neq 0$, we have $\tau(f) = f_d$.
If $I \subseteq \FFF[\xx_N]$ is an ideal, the {\em associated graded ideal} is
\begin{equation}
\gr \, I := \langle \tau(f) \,:\, f \in I, \, \, f \neq 0 \rangle.
\end{equation}
In other words, the ideal $\gr \, I$ is generated by the top homogeneous components of all nonzero polynomials in $I$.
The associated graded ideal $\gr \, I \subseteq \FFF[\xx_N]$ is homogeneous by construction.

Returning to the setting of our finite locus $Z \subseteq \FFF^N$, we may extend the chain \eqref{short-chain} of ungraded $\FFF$-vector space
isomorphisms
\begin{equation}
\label{long-chain}
\FFF[Z] \cong \FFF[\xx_N] / \II(Z) \cong \FFF[\xx_N] / \gr \, \II(Z)
\end{equation}
where the last quotient $\FFF[\xx_N] / \gr \, \II(Z)$ has the additional structure of a graded $\FFF$-vector space.

When the locus $Z$ possesses symmetry, more can be said.
Let $G \subseteq GL_N(\FFF)$ be a finite matrix group and assume that the group algebra $\FFF[G]$ is semisimple.
Equivalently, this means that $|G| \neq 0$ in $\FFF$.
The natural action of $G$ on $\FFF^N$ induces an action on $\FFF[\xx_N]$ by linear substitutions.
If $Z$ is stable under the action of $G$, the isomorphisms \eqref{long-chain} hold in the category of $\FFF[G]$-modules, 
and the last quotient $\FFF[\xx_N] / \gr \, \II(Z)$ has the additional structure of a graded $\FFF[G]$-module.

\subsection{The Schensted correspondence}
Given $n \geq 0$, a {\em partition of $n$} is a weakly decreasing sequence $\lambda = (\lambda_1 \geq \cdots \geq \lambda_k)$ of positive integers
which satisfy $\lambda_1 + \cdots + \lambda_k = n$.
We write $\lambda \vdash n$ to indicate that $\lambda$ is a partition of $n$. We identify a partition $\lambda = (\lambda_1, \dots, \lambda_k)$ with its
(English) {\em Young diagram} consisting of $\lambda_i$ left-justified boxes in row $i$.

Let $\lambda \vdash n$ be a partition.
A {\em tableau} of shape $\lambda$ is an assignment $T: \lambda \rightarrow \{1,2,\dots\}$ of positive integers to the boxes of $\lambda$.
 A {\em standard tableau} of shape $\lambda$ is a bijective filling 
$T: \lambda \rightarrow [n]$ of the boxes of $\lambda$ with $1, 2, \dots, n$ which is increasing across rows and down columns.
We display, from left to right, the Young diagram of $\lambda = (4,2,2) \vdash 8$, a standard tableau of shape $\lambda$, and another tableau of 
shape $\lambda$ which is not standard.
\begin{center}
\begin{Young}
 & & & \cr
 & \cr
 & \cr
\end{Young}   \quad \quad
\begin{Young}
 1 & 2  & 5 & 8 \cr
 3 & 4 \cr
 6 & 7 \cr
\end{Young}   \quad \quad
\begin{Young}
 1 & 3 & 7 & 12 \cr
 4 & 5 \cr
 9 & 10 \cr
\end{Young}   
\end{center}
Although the above tableau $T: \lambda \rightarrow \{1,2, \dots \}$ on the far right is not standard, it is an injective filling which is (strictly)
increasing across rows and down columns.  We call a tableau satisfying these conditions a {\em partial standard tableau}.

The famous {\em Schensted correspondence} \cite{Schensted} is a bijection
\begin{equation}
\symm_n \xrightarrow{ \quad \sim \quad} \bigsqcup_{\lambda \vdash n} \{ (P, Q) \,:\, P, Q \in \SYT(\lambda) \}
\end{equation}
which sends a permutation $w \in \symm_n$ to a pair $(P(w), Q(w))$ of standard  tableaux with the same $n$-box shape.
The Schensted correspondence is most commonly defined using an insertion algorithm (see e.g. \cite{Sagan} for details).
We will not need the insertion formulation of the Schensted bijection, but 
 an equivalent ``geometric" formulation due to Viennot \cite{Viennot} recalled in the next section
will be crucial in our work.
Schensted proved that his bijection relates to increasing subsequences as follows.

\begin{theorem}
\label{schensted-theorem} (Schensted \cite[Thm. 1]{Schensted})
Let $w \in \symm_n$ and suppose that $w \mapsto (P(w), Q(w))$ under the Schensted bijection where $P(w)$ and $Q(w)$ have shape $\lambda \vdash n$.
The first part $\lambda_1$ of the partition $\lambda$ is the length of the longest increasing subsequence of $w$.
\end{theorem}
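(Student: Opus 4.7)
The plan is to prove this by tracking how the insertion tableau $P(w)$ evolves as one reads $w$ from left to right. Recall that $P(w)$ is built by successive row insertion: setting $P_0 = \emptyset$ and letting $P_i$ be the tableau obtained by row-inserting $w(i)$ into $P_{i-1}$, we have $P(w) = P_n$. The key is to establish an invariant about the first row of $P_i$ strong enough that the statement about $\lambda_1$ drops out at $i=n$.

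The invariant I would prove by induction on $i$ is the following: for every $k \geq 1$, the entry in cell $(1,k)$ of $P_i$ (if it exists) equals
\begin{equation*}
\min \bigl\{ w(j) \,:\, \text{$w(j)$ is the last term of some length-$k$ increasing subsequence of $w(1), \dots, w(i)$} \bigr\},
\end{equation*}
and cell $(1,k)$ is empty in $P_i$ if and only if no length-$k$ increasing subsequence exists in the prefix $w(1),\dots,w(i)$. The base case $i = 1$ is immediate. For the inductive step, let $k^\star$ be the largest index such that cell $(1,k^\star)$ of $P_i$ holds an entry strictly less than $w(i+1)$ (with $k^\star = 0$ if no such cell exists). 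By the row-insertion rule, $w(i+1)$ occupies cell $(1, k^\star + 1)$ of $P_{i+1}$, either extending row $1$ or bumping its previous occupant down to row $2$.

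Verifying the invariant for $P_{i+1}$ splits naturally into three cases according to column index. Columns $1,\dots,k^\star$ are unchanged, and adding $w(i+1)$ cannot decrease the minimum final term of a length-$k$ subsequence for $k \leq k^\star$. In column $k^\star + 1$, the new entry is $w(i+1)$; by the inductive hypothesis, the entry previously in cell $(1,k^\star)$ witnesses a length-$k^\star$ subsequence ending below $w(i+1)$, which extends to length $k^\star + 1$, and any length-$(k^\star+1)$ subsequence in the new prefix either ends at $w(i+1)$ or already existed in the old prefix with final term $\geq w(i+1)$ by the choice of $k^\star$. For columns $k > k^\star + 1$, cell $(1,k)$ is unchanged in $P_{i+1}$, and the defining property of $k^\star$ (combined with the inductive hypothesis) forbids any new length-$k$ subsequence ending at $w(i+1)$, so the minimum final term is preserved.

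Granting the invariant at $i = n$, the length of the first row of $P(w)$ is the largest $k$ for which cell $(1,k)$ is non-empty, which is exactly the largest $k$ such that $w$ admits an increasing subsequence of length $k$, namely $\lis(w)$. The main obstacle is the bookkeeping in the inductive step: one must argue carefully that a potential length-$k$ subsequence of the new prefix ending at $w(i+1)$ with $k > k^\star + 1$ would force a length-$(k-1) > k^\star$ subsequence of the old prefix ending strictly below $w(i+1)$, contradicting the maximality of $k^\star$. Once this accounting is done, the remaining cases are routine and the theorem follows without needing the $Q$-tableau at all.
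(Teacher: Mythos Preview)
Your argument is correct and is essentially the classical proof going back to Schensted himself: one shows by induction on the prefix length that the $k$-th entry of the first row of $P_i$ records the minimum possible last term of a length-$k$ increasing subsequence of $w(1),\dots,w(i)$, from which the theorem is immediate at $i=n$. The case analysis you outline is the standard one, and the ``main obstacle'' you flag is handled exactly as you describe.

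However, the paper does not supply its own proof of this statement. Theorem~\ref{schensted-theorem} is quoted as background from Schensted's original paper and left unproved; the paper explicitly remarks that it will not need the insertion formulation of the correspondence at all, relying instead on Viennot's geometric shadow-line construction (Theorem~\ref{viennot-theorem}). From that viewpoint there is a different, and arguably more direct, route to the same conclusion: the number of shadow lines of $w$ equals $\lambda_1$ by Viennot's theorem, and one checks combinatorially that this number is $\lis(w)$ (any increasing subsequence meets each shadow line at most once, and conversely one can greedily select one point per shadow line to build an increasing subsequence of the right length). Your insertion-based proof is self-contained and uses only the $P$-tableau, while the shadow-line argument trades the induction for a direct geometric picture; both are standard, and the paper simply outsources the result.
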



\subsection{$\symm_n$-representation theory}
Let $\FFF$ be a field in which $n \neq 0$ so that the group algebra 
$\FFF[\symm_n]$ is semisimple.
There is a one-to-one correspondence between partitions of $n$ and irreducible representations of $\symm_n$ over $\FFF$.
If $\lambda \vdash n$ is a partition, we write $V^\lambda$ for the corresponding irreducible module,
 $\chi^\lambda: \symm_n \rightarrow \FFF$ for its character,
and $f^\lambda := \dim V^\lambda$ for its dimension.
The number $f^\lambda$ counts standard tableaux of shape $\lambda$.

The vector space $\Class(\symm_n,\FFF)$ of $\FFF$-valued class functions on $\symm_n$ has basis 
$\{ \chi^\lambda \,:\, \lambda \vdash n \}$ given by irreducible characters.
The {\em Kronecker product} $*$ on $\Class(\symm_n,\FFF)$ is defined by
\begin{equation}
(\varphi * \psi)(w) := \varphi(w) \cdot \psi(w)
\end{equation}
for any $\varphi, \psi \in \Class(\symm_n,\FFF)$ and $w \in \symm_n$.
If $V_1$ and $V_2$ are $\symm_n$-modules, their vector space tensor product $V_1 \otimes V_2$ carries a diagonal action of $\symm_n$
by the rule $w \cdot (v_1 \otimes v_2) := (w \cdot v_1) \otimes (w \cdot v_2)$.
The characters $\chi_{V_1}, \chi_{V_2}, \chi_{V_1 \otimes V_2}: \symm_n \rightarrow \FFF$ of these modules are related by 
$\chi_{V_1 \otimes V_2} = \chi_{V_1} * \chi_{V_2}$.

\section{Hilbert series and standard monomial basis}
\label{Hilbert}

\subsection{The injection relations}
In order to analyze the quotients $\FFF[\xx_{n \times n}] / I_n$, we start by exhibiting strategic elements of the ideal $I_n$.
Given two subsets $S, T \subseteq [n]$, define elements $a_{S,T}, b_{S,T} \in \FFF[\xx_{n \times n}]$ by 
\begin{equation}
a_{S,T} := \sum_{f: S \hookrightarrow T} \left( \prod_{ i \in S } x_{i,f(i)} \right) \quad \quad \text{and} \quad \quad
b_{S,T} := \sum_{f: S \hookrightarrow T} \left( \prod_{ i \in S } x_{f(i),i}  \right)
\end{equation}
where both sums are over injective functions $f: S \hookrightarrow T$. 
For example, if $S = \{2,4\}$ and $T = \{1,3,4\}$ we have 
\begin{align*}
a_{S,T}  &= x_{2,1} x_{4,3} + x_{2,1} x_{4,4} + x_{2,3} x_{4,1} + x_{2,3} x_{4,4} + x_{2,4} x_{4,1} + x_{2,4} x_{4,3}, \\
b_{S,T}  &= x_{1,2} x_{3,4} + x_{1,2} x_{4,4} + x_{3,2} x_{1,4} + x_{3,2} x_{4,4} + x_{4,2} x_{1,4} + x_{4,2} x_{3,4}.
\end{align*}
In general, the polynomials $a_{S,T}$ and $b_{S,T}$ are obtained from one another by transposing the matrix $\xx_{n \times n}$ of variables.
We have $a_{S,T} = b_{S,T} = 0$ whenever $|S| > |T|$.

Since the product of any two variables in the same row or column of $\xx_{n \times n}$ is a generator of $I_n$, we have the congruences
\begin{equation}
\label{all-functions-congruences}
a_{S,T} \equiv \prod_{i \in S} \left( \sum_{j \in T} x_{i,j} \right) \mod I_n \quad \quad \text{and} \quad \quad
b_{S,T} \equiv \prod_{i \in S} \left( \sum_{j \in T} x_{j,i} \right)  \mod I_n
\end{equation}
modulo $I_n$.
In other words, as far as the quotient $\FFF[\xx_{n \times n}]/I_n$ is concerned, we could have defined $a_{S,T}$ and $b_{S,T}$ using all functions
 $S \rightarrow T$, not just injections.
Our first lemma states that $a_{S,T}$ and $b_{S,T}$ are members of $I_n$ provided that $|S| + |T| > n$.

\begin{lemma}
\label{injective-lemma}
Let $S, T \subseteq [n]$ be subsets.  If $|S| + |T| > n$ we have $a_{S,T}, b_{S,T} \in I_n$.
\end{lemma}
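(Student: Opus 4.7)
The plan is to reduce the claim for $a_{S,T}$ and $b_{S,T}$ to a single case using the symmetry of the generators of $I_n$, and then to exploit the row/column sum relations to replace $T$ by its complement. The observation that drives the proof is that the generating set of $I_n$ is stable under the transpose $x_{i,j} \leftrightarrow x_{j,i}$, so $b_{S,T}$ is obtained from $a_{T,S}$ (or $a_{S,T}$, after relabeling) by an automorphism of the polynomial ring that preserves $I_n$. Thus it suffices to prove $a_{S,T} \in I_n$ whenever $|S| + |T| > n$.

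The main step is to use the row-sum generators $x_{i,1} + \cdots + x_{i,n}$ of $I_n$ to swap $T$ for its complement $[n] \setminus T$. By the congruence in \eqref{all-functions-congruences}, modulo $I_n$ we may work with the ``all functions'' product
\begin{equation}
a_{S,T} \equiv \prod_{i \in S}\Bigl(\sum_{j \in T} x_{i,j}\Bigr) \pmod{I_n}.
\end{equation}
Since $\sum_{j=1}^n x_{i,j} \in I_n$ for each $i$, we have $\sum_{j \in T} x_{i,j} \equiv -\sum_{j \in [n] \setminus T} x_{i,j} \pmod{I_n}$. Multiplying these $|S|$ congruences together and applying \eqref{all-functions-congruences} once more (now in the opposite direction), we obtain
\begin{equation}
a_{S,T} \equiv (-1)^{|S|} \prod_{i \in S}\Bigl(\sum_{j \in [n]\setminus T} x_{i,j}\Bigr) \equiv (-1)^{|S|} \, a_{S,\,[n]\setminus T} \pmod{I_n}.
\end{equation}

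The hypothesis $|S| + |T| > n$ now kicks in: we have $|[n] \setminus T| = n - |T| < |S|$, so there are no injections $S \hookrightarrow [n] \setminus T$ at all, and therefore $a_{S,[n] \setminus T} = 0$ literally (as a polynomial, not just modulo $I_n$). Chaining the congruences gives $a_{S,T} \in I_n$, and the transpose argument finishes $b_{S,T} \in I_n$.

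I expect no serious obstacle here; the argument is almost formal once one sees the ``complement trick.'' The only mild care required is in justifying that the ``all functions'' product is congruent to the injection sum, but this is exactly \eqref{all-functions-congruences}, whose proof uses that $x_{i,j} x_{i,j'} \in I_n$ kills every non-injective term in the expansion. Writing out the calculation above in a single display should suffice.
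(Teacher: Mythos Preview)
Your argument is correct, and it is genuinely different from (and shorter than) the paper's proof. Both begin the same way: use the transpose symmetry of $I_n$ to reduce to $a_{S,T}$, and invoke \eqref{all-functions-congruences} to pass to the product $\prod_{i \in S}\bigl(\sum_{j \in T} x_{i,j}\bigr)$ modulo $I_n$. From there, the paper normalizes to $S=[s]$, $T=[t]$ with $s+t=n+1$ and runs an induction on $s$, peeling off one factor of the product at a time and controlling an error term via a Pigeonhole argument. Your ``complement trick'' avoids the induction entirely: the row-sum relations give $\sum_{j\in T} x_{i,j} \equiv -\sum_{j\in [n]\setminus T} x_{i,j}$, so $a_{S,T}\equiv (-1)^{|S|} a_{S,[n]\setminus T}$, and the latter is literally zero because $|[n]\setminus T|<|S|$. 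This is a one-line replacement for the entire inductive step, and it also makes transparent why the threshold is exactly $|S|+|T|>n$. The paper's approach buys nothing extra here; your route is simply more direct. One cosmetic point: $b_{S,T}$ is the transpose of $a_{S,T}$ (not $a_{T,S}$), so you can drop the parenthetical ``or $a_{S,T}$, after relabeling'' and just say it plainly.
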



\begin{proof}
The polynomial $b_{S,T}$ is obtained from $a_{S,T}$ by transposing the matrix $\xx_{n \times n}$ of variables, an operation under which $I_n$ is stable.
As such, it suffices to prove the lemma for $a_{S,T}$. Furthermore, by the congruences \eqref{all-functions-congruences} it suffices to prove
the lemma when $|S| + |T| = n+1$.
Finally, since $I_n$ is stable under the action of the product group $\symm_n \times \symm_n$ on the rows and columns of $\xx_{n \times n}$, it is enough
 to consider the case where $S = [s]$ and $T = [t]$ for $s + t = n+1$.
 
 We argue by increasing induction on $s$ (and decreasing induction on $t$).  If $s = 1$ then $t = n$ and $a_{S,T} = x_{1,1} + x_{1,2} + \cdots + x_{1,n}$ is a generator
 of the ideal $I_n$. If $s > 1$, we have
 \begin{equation}
 a_{S,T} \equiv \prod_{i = 1}^s \left( \sum_{j = 1}^t x_{i,j} \right) = (x_{1,1} + x_{1,2} + \cdots + x_{1,t}) \times \left[ \prod_{i = 2}^s \left( \sum_{j = 1}^{t+1} x_{i,j} \right)  \right] - \EE
 \end{equation}
 where the congruence modulo $I_n$ follows from \eqref{all-functions-congruences},  the  expression $[ \, \cdots ]$ in square brackets lies in $I_n$ 
 by induction, and the ``error term" $\EE$ is given by
 \begin{equation}
 \EE =  (x_{1,1} + x_{1,2} + \cdots + x_{1,t}) \times \sum_{\varnothing \neq S' \subseteq \{2, \dots, s\}} \left( \prod_{i' \in S'} x_{i',t+1} \times  \prod_{i \in \{2,\dots,s\} - S'} (x_{i,1} + \cdots + x_{i,t})  \right).
 \end{equation}
 It suffices to show that $\EE \in I_n$.  When the $|S'| > 1$ and $i'_1, i'_2 \in S'$ are distinct, the corresponding summand in $\EE$ contains the product 
 $x_{i'_1,t+1} \cdot x_{i'_2,t+1}$, and so lies in $I_n$. We conclude that 
 \begin{equation}
 \EE \equiv 
 (x_{1,1} + x_{1,2} + \cdots + x_{1,t}) \times \sum_{i_0 =  2}^s  \left( x_{i_0,t+1} \times  \prod_{2 \leq i \leq s}^{i \neq i_0} (x_{i,1} + \cdots + x_{i,t})  \right)
 \end{equation}
 modulo $I_n$. Applying the congruences \eqref{all-functions-congruences} and the defining relations of $I_n$, we arrive at
   \begin{equation}
   \label{e-final}
 \EE \equiv  \pm
 (x_{1,t+1} + x_{1,t+2} + \cdots + x_{1,n}) \times \sum_{i_0 =  2}^s  \left( x_{i_0,t+1} \times  \prod_{2 \leq i \leq s}^{i \neq i_0} (x_{i,t+2} + x_{i,t+3} \cdots + x_{i,n})  \right)
 \end{equation}
 The sum $(x_{i,t+2} + x_{i,t+3} \cdots + x_{i,n}) $ contains $n-t-1 = n - (n+1-s) - 1 = s - 2$ terms. The Pigeonhole Principle
 implies that every term in the expansion of the RHS of 
 the congruence \eqref{e-final} will contain a product of variables $x_{i,j} \cdot x_{i',j}$ for some $i \neq i'$ so that $\EE \in I_n$. 
 We conclude that $a_{S,T} \in I_n$ and the lemma is proven.
\end{proof}

\subsection{Shadow sets}

We represent a permutation $w = [w(1), \dots, w(n)] \in \symm_n$ with its {\em graph}, i.e. the collection of points $\{ (i,w(i)) \,:\, 1 \leq i \leq n \}$ on the grid $[n] \times [n]$.
For example, the permutation $w = [4,1,8,5,3,6,2,7] \in \symm_8$ is given below in bullets. 

\begin{center}
\begin{tikzpicture}[scale = 0.5]

\draw (1,1) grid (8,8);

\node at (1,4) {$\bullet$};
\node at (2,1) {$\bullet$};
\node at (3,8) {$\bullet$};
\node at (4,5) {$\bullet$};
\node at (5,3) {$\bullet$};
\node at (6,6) {$\bullet$};
\node at (7,2) {$\bullet$};
\node at (8,7) {$\bullet$};

\end{tikzpicture}
\end{center}

Viennot used \cite{Viennot} the graph of a permutation $w$ to obtain its image $(P(w),Q(w))$ under the Schensted correspondence as follows.
Shine a flashlight northeast from the origin (0,0). Each bullet in the permutation casts a shadow to its northeast. The boundary of the shaded region is the 
{\em first shadow line}; in our example it is as follows.
\begin{center}
\begin{tikzpicture}[scale = 0.5]

\draw (1,1) grid (8,8);

\node at (1,4) {$\bullet$};
\node at (2,1) {$\bullet$};
\node at (3,8) {$\bullet$};
\node at (4,5) {$\bullet$};
\node at (5,3) {$\bullet$};
\node at (6,6) {$\bullet$};
\node at (7,2) {$\bullet$};
\node at (8,7) {$\bullet$};

\draw[very thick] (1,9) -- (1,4) -- (2,4) -- (2,1) -- (9,1);

\end{tikzpicture}
\end{center}
Removing the points on the first shadow line and repeating this procedure, we obtain the {\em second shadow line}. 
Iterating, we obtain the {\em third shadow line}, the {\em fourth shadow line}, and so on.
In our example, the shadow lines are shown below.
\begin{center}
\begin{tikzpicture}[scale = 0.5]

\draw (1,1) grid (8,8);

\node at (1,4) {$\bullet$};
\node at (2,1) {$\bullet$};
\node at (3,8) {$\bullet$};
\node at (4,5) {$\bullet$};
\node at (5,3) {$\bullet$};
\node at (6,6) {$\bullet$};
\node at (7,2) {$\bullet$};
\node at (8,7) {$\bullet$};

\draw[very thick] (1,9) -- (1,4) -- (2,4) -- (2,1) -- (9,1);

\draw[very thick] (3,9) -- (3,8) -- (4,8) -- (4,5) -- (5,5) -- (5,3) -- (7,3) -- (7,2) -- (9,2);

\draw[very thick] (6,9) -- (6,6) -- (9,6);

\draw[very thick] (8,9) -- (8,7) -- (9,7);

\node at (2,4) {${\color{red} \bullet}$};
\node at (4,8) {${\color{red} \bullet}$};
\node at (5,5) {${\color{red} \bullet}$};
\node at (7,3) {${\color{red} \bullet}$};

\end{tikzpicture}
\end{center}

Let $w \in \symm_n$ and suppose that the shadow lines of $w$ are given by $L_1, \dots, L_r$ from southwest to northeast.
Viennot proved \cite{Viennot} that if $w \mapsto (P(w), Q(w))$ under the Schensted correspondence then
the $y$-coordinates of the infinite horizontal rays in $L_1, \dots, L_r$ form the first row of $P(w)$ and the $x$-coordinates of the infinite vertical rays 
of $L_1, \dots, L_r$ form the first row of $Q(w)$.  
In our example, the first row of $P(w)$ is $\begin{young} 1 & 2 & 6 & 7 \end{young}$ while the first row of $Q(w)$ is $\begin{young} 1 & 3 & 6 & 8 \end{young}$.
In particular, the common length of the first row of $P(w)$ and $Q(w)$ is the number of shadow lines.
The northeast corners of the shadow lines played an important role in Viennot's work, and will for us, as well.

\begin{defn}
The {\em shadow set} $\SSS(w)$ of a permutation $w \in \symm_n$ is the collection of points $(i,j)$ in the grid $[n] \times [n]$ which lie at the northeast corner of 
a shadow line of $w$.
\end{defn}

In our example, the points in the shadow set 
$\SSS(w) = \{ (2,4), (4,8), (5,5), (7,3) \}$ are drawn in red.
For any permutation $w \in \symm_n$, the shadow set $\SSS(w)$ contains at most one point in any row or column. Such subsets of the square grid have a name.

\begin{defn}
A subset $\RRR \subseteq [n] \times [n]$ is a {\em (non-attacking) rook placement} if $\RRR$ contains at most one point in any row or column.
\end{defn}

Rook placements are also known as `partial permutations'.
Importantly, the Viennot shadow line construction may be performed on an arbitrary rook placement, not just on the graph of a permutation.

Although every permutation shadow set is a rook placement, not every rook placement is the shadow set of a permutation.
 For example, shadow sets contain no points in row 1 or column 1.
In Lemma~\ref{ballot-criterion-lemma} below, we give a combinatorial criterion for deciding whether a rook placement is a shadow set.

Returning to our permutation $w \in \symm_n$,
we may iterate the shadow line construction on the shadow set $\SSS(w)$. In our $n = 8$ example this yields the shadow lines.
\begin{center}
\begin{tikzpicture}[scale = 0.5]

\draw (1,1) grid (8,8);

\draw [very thick] (2,9) -- (2,4) -- (7,4) -- (7,3) -- (9,3);

\draw [very thick] (4,9) -- (4,8) -- (5,8) -- (5,5) -- (9,5);

\node at (2,4) {${\color{red} \bullet}$};
\node at (4,8) {${\color{red} \bullet}$};
\node at (5,5) {${\color{red} \bullet}$};
\node at (7,3) {${\color{red} \bullet}$};

\node at (5,8) {${\color{blue} \bullet}$};
\node at (7,4) {${\color{blue} \bullet}$};

\end{tikzpicture}
\end{center}
Viennot proved that the horizontal and vertical rays of these `iterated' shadow lines give the second rows of $P(w)$ and $Q(w)$, respectively.
In our example, the second row of $P(w)$ is $\begin{young} 3 & 5 \end{young}$ and the second row of $Q(w)$ is $\begin{young} 2 & 4 \end{young}$.
These iterated shadow lines produce an iterated shadow set $\SSS(\SSS(w))$ whose points are drawn in blue.
Repeating this procedure in our example yields the iterated shadow sets and shadow lines
\begin{center}
\begin{tikzpicture}[scale = 0.5]

\draw (1,1) grid (8,8);

\draw [very thick] (5,9) -- (5,8) -- (7,8) -- (7,4) -- (9,4);

\draw [very thick, dashed] (7,9) -- (7,8) -- (9,8);

\node at (5,8) {${\color{blue} \bullet}$};
\node at (7,4) {${\color{blue} \bullet}$};

\node at (7,8) {${\color{green} \bullet}$};

\end{tikzpicture}
\end{center}
and we conclude that the tableaux $P(w)$ and $Q(w)$ are given by
\begin{equation*}
\begin{young}
1 & 2 & 6 & 7 \cr
3 & 5 \cr
4 \cr 
8
\end{young} \quad \quad \text{and} \quad \quad
\begin{young}
1 & 3 & 6 & 8 \cr
2 & 4 \cr
5 \cr
7
\end{young},
\end{equation*}
respectively.

\begin{theorem} (Viennot \cite{Viennot})
\label{viennot-theorem}
The shadow line procedure described above computes the image $(P(w), Q(w))$ of a permutation $w \in \symm_n$ under the Schensted correspondence.
\end{theorem}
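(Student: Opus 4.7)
The plan is to argue by induction on $n$, building up the rook placement one point at a time from left to right and checking that the shadow line picture evolves in lockstep with the RSK row--insertion algorithm. More precisely, let $w_k \in \symm_k$ denote the partial placement $\{(i,w(i)) : 1 \leq i \leq k\}$, and let $(P_k, Q_k)$ be the image of $w_k$ under Schensted insertion (viewed as a rook placement in $[n] \times [n]$). I will show by induction on $k$ that (a) the first rows of $P_k$ and $Q_k$ are recorded by the horizontal and vertical infinite rays of the shadow lines of $w_k$, and (b) the ``bumped'' points that insertion pushes into row~2 are exactly the non--ray northeast corners of the shadow lines of $w_k$.

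The base case $k=0$ is vacuous. For the inductive step, consider adding the new point $p = (k+1, w(k+1))$. Two cases arise. If $w(k+1)$ exceeds every $y$--coordinate currently labeling a horizontal infinite ray, then $p$ is not shadowed by any previous point, so it starts a new shadow line whose horizontal ray lies at height $w(k+1)$ and whose vertical ray lies at abscissa $k+1$; this matches the RSK rule of appending $w(k+1)$ to row~1 of $P$ and $k+1$ to row~1 of $Q$. Otherwise, let $y_j$ be the smallest first--row entry of $P_k$ exceeding $w(k+1)$. The ray at height $y_j$ gets truncated at its leftmost visible point above $p$, and the new shadow line terminating at $p$ takes over its horizontal infinite ray, precisely mirroring the RSK bump: $w(k+1)$ replaces $y_j$ in the first row of $P$, while $y_j$ is ejected and must be inserted into row~2. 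Carefully checking that the newly created northeast corner sits at exactly the bumped $(x, y_j)$ verifies claim (b).

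Having established (a)--(b) for $w = w_n$, it follows that the first row of $P(w)$ and the first row of $Q(w)$ are read off the shadow lines of $w$ as claimed, and the shadow set $\SSS(w)$ encodes, as a new rook placement on $[n] \times [n]$, the sequence of bumps feeding into row~2 of the insertion procedure. Since RSK then recursively applies row~1 insertion to this bumped sequence to produce rows~2 of $P$ and $Q$, and since $\SSS(w)$ is precisely the rook placement encoding this bumped sequence (with the $Q$--recording positions being the $x$--coordinates at which bumps occur), we may iterate: the statement for $\SSS(w)$ yields rows~2 of $P$ and $Q$, and by induction on row depth we recover all rows of $(P(w), Q(w))$.

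The main obstacle is the careful bookkeeping in the inductive step that identifies the ``geometric bump'' occurring in the shadow line picture with the algebraic bump in RSK--in particular, tracking that the $x$--coordinate $k+1$ of the newly added point $p$ produces the correct $Q$--entry while the $y$--coordinate of the ejected ray produces the correct value to feed into the next row. Once this correspondence is pinned down at the level of a single insertion, the iteration on shadow sets and the matching of subsequent tableau rows follows formally.
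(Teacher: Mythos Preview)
The paper does not prove this theorem at all: it is stated with a citation to Viennot, and immediately afterward the author remarks that ``we may take Theorem~\ref{viennot-theorem} as the definition of the Schensted correspondence.'' So there is no argument in the paper to compare your proposal against.

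That said, your sketch is the standard proof of Viennot's result and is correct in outline. Two small points worth making explicit in the inductive step. First, you should note that appending the rightmost point $(k+1,w(k+1))$ cannot change which shadow line any earlier point lies on, since membership in $L_j$ is governed by chains strictly to the southwest and the new point lies strictly to the east of everything already placed; this is what guarantees that exactly one shadow line is modified (or a new one created) and that no spurious northeast corners appear or disappear elsewhere. Second, the new northeast corner you create sits at $(k+1,y_j)$, so the shadow set records precisely the pair (time of bump, value bumped) $=(k+1,y_j)$; this is exactly the data RSK feeds into row~2 at step $k+1$, which is what makes the recursion on $\SSS(w)$ match the recursion on tableau rows. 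Once those two checks are written out, the argument goes through.
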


For our purposes,
we may take Theorem~\ref{viennot-theorem} as the definition of the Schensted correspondence.
Combining Theorem~\ref{viennot-theorem} with Schensted's Theorem~\ref{schensted-theorem} yields the following result immediately.

\begin{lemma}
\label{shadow-size-lemma}
Let $w \in \symm_n$.  The size $|\SSS(w)|$ of the shadow set of $w$ is given by
\begin{equation}
|\SSS(w)| = n - \lis(w).
\end{equation}
\end{lemma}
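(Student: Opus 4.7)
The plan is to reduce the count to a decomposition of the $n$ bullets of $w$ across the shadow lines, then observe that each shadow line contributes one fewer northeast corner than bullets.

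First I would recall that by Theorem~\ref{viennot-theorem} together with Schensted's Theorem~\ref{schensted-theorem}, the number $r$ of shadow lines $L_1, \dots, L_r$ of $w$ equals the length $\lambda_1$ of the first row of $P(w)$, which in turn equals $\lis(w)$. Next I would observe that every bullet in the graph of $w$ lies on exactly one shadow line (this is built into Viennot's iterative construction: the first shadow line is peeled off the bullets it contains, then the procedure is repeated), so the shadow lines partition the $n$ bullets of $w$ into blocks of some sizes $k_1, \dots, k_r$ with $k_1 + \cdots + k_r = n$.

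The heart of the argument is the local claim that a shadow line passing through exactly $k$ bullets contributes exactly $k-1$ points to $\SSS(w)$. This follows from the structure of a shadow line: traversed from its infinite vertical ray to its infinite horizontal ray, the line alternates between convex corners (southwest corners, which are precisely the $k$ bullets through which the line passes, since each such bullet forces the boundary of the shaded region to turn from rightward to downward) and concave corners (northeast corners, the kind recorded in $\SSS(w)$). Since the line begins and ends with a bullet-turn but has concave corners strictly between consecutive bullets, there are exactly $k - 1$ such northeast corners.

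Summing the local counts gives
\begin{equation*}
|\SSS(w)| \;=\; \sum_{i=1}^{r} (k_i - 1) \;=\; \Big(\sum_{i=1}^r k_i\Big) - r \;=\; n - \lis(w),
\end{equation*}
which is the desired identity. The only nontrivial step is the local claim about $k-1$ northeast corners per shadow line; I would confirm it by tracking the rightmost-unvisited bullet as the line is drawn from the bullet of maximum $y$-coordinate (giving the vertical ray) to the bullet of minimum $y$-coordinate (giving the horizontal ray), but this is a routine verification from Viennot's definition and matches the worked example, where the four shadow lines contain $2,4,1,1$ bullets and contribute $1,3,0,0$ northeast corners for a total of $4 = 8 - \lis(w)$.
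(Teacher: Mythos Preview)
Your argument is correct. The paper's own proof is a one-line remark that the lemma follows ``immediately'' from combining Viennot's Theorem~\ref{viennot-theorem} with Schensted's Theorem~\ref{schensted-theorem}; the intended reading is that, by Viennot, the shadow set $\SSS(w)$ is exactly the rook placement whose iterated shadow construction produces rows $2,3,\dots$ of $P(w)$ and $Q(w)$, so $|\SSS(w)|$ equals the number of boxes below the first row, namely $n-\lambda_1 = n-\lis(w)$.

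Your route is slightly different and a bit more elementary: rather than invoking the full iterated Viennot construction, you only use the fact that the number $r$ of shadow lines equals $\lambda_1 = \lis(w)$, and then count northeast corners directly via the local observation that a staircase shadow line through $k$ bullets has exactly $k-1$ concave corners. This is a perfectly valid alternative; it trades the global statement ``$\SSS(w)$ encodes the sub-first-row part of the tableaux'' for a hands-on corner count, and has the mild advantage of not needing the iterated part of Theorem~\ref{viennot-theorem}. Both arguments are short and the difference is largely one of emphasis.
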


We close this subsection with a combinatorial criterion for deciding when a rook placement $\RRR$ is the shadow set of some permutation $w \in \symm_n$.
We use the fact that the shadow line construction may be applied to 
$\RRR$.  This will yield a pair $(P,Q)$ of partial standard tableaux with the same shape such that the 
$y$-coordinates of $\RRR$ are the entries in $P$ and the $x$-coordinates in $\RRR$ are the entries in $Q$.

\begin{lemma}
\label{ballot-criterion-lemma}
Let $\RRR \subseteq [n] \times [n]$ be a rook placement and apply the shadow line construction to $\RRR$.
Let $L_1, \dots, L_r$ be the shadow lines so obtained.
Define two length $n$ sequences $x_1 x_2 \dots x_n$ and $y_1 y_2 \dots y_n$ over the alphabet $\{1,0,-1\}$ by
\begin{equation}
x_i = \begin{cases}
1 & \text{if one of the lines $L_1, \dots, L_r$ has a vertical ray at $x = i$,} \\
-1 & \text{if the vertical line $x = i$ does not meet $\RRR$,} \\
0 & \text{otherwise.}
\end{cases}
\end{equation}
and
\begin{equation}
y_i = \begin{cases}
1 & \text{if one of the lines $L_1, \dots, L_r$ has a horizontal ray at $y = i$,} \\
-1 & \text{if the horizontal line $y = i$ does not meet $\RRR$,} \\
0 & \text{otherwise.}
\end{cases}
\end{equation}
Then $\RRR = \SSS(w)$ is the shadow set of some permutation $w \in \symm_n$ if and only if for all $1 \leq i \leq n$ we have 
$x_1 + x_2 + \cdots + x_i \leq 0$ and $y_1 + y_2 + \cdots + y_i \leq 0$.
\end{lemma}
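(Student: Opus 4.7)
The plan is to use the fact that Viennot's shadow construction extends to a bijection between arbitrary rook placements $\RRR \subseteq [n] \times [n]$ and pairs of tableaux $(P(\RRR), Q(\RRR))$ of common shape whose entries are distinct values from $[n]$, strictly increasing along rows and down columns; the entries of $P(\RRR)$ are the $y$-coordinates of $\RRR$ and those of $Q(\RRR)$ the $x$-coordinates. Iterating the shadow procedure exactly as in Theorem~\ref{viennot-theorem} identifies, for any $w \in \symm_n$, the pair $(P(\SSS(w)), Q(\SSS(w)))$ with $(P(w), Q(w))$ after deletion of the top row of each. Thus $\RRR = \SSS(w)$ for some $w \in \symm_n$ if and only if we can prepend a first row to $P(\RRR)$ and $Q(\RRR)$ to obtain a pair of standard Young tableaux filling $[n]$ with a common shape, and the partial sum conditions are precisely the constraints that make this prepending legal. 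The key bookkeeping identity is $x_1 + \cdots + x_j = v_j + c_j - j$, where $c_j = |\{(a,b) \in \RRR \,:\, a \leq j\}|$ and $v_j$ is the number of shadow lines of $\RRR$ whose leftmost point lies in a column $\leq j$; the $x$-condition is thereby equivalent to $v_j \leq j - c_j$ for all $j$, and symmetrically for $y$.

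For the forward direction, suppose $\RRR = \SSS(w)$. The entries of $Q(w)$ outside its first row are exactly the $x$-coordinates of $\SSS(w)$, so row $1$ of $Q(w)$ is $[n]$ with the columns of $\RRR$ removed, written in increasing order. Hence the count of first-row entries of $Q(w)$ that are $\leq j$ equals $j - c_j$, while applying Theorem~\ref{viennot-theorem} to $\SSS(w)$ shows that the count of second-row entries of $Q(w)$ that are $\leq j$ equals $v_j$. Since the two rows have disjoint entries, a standard cumulative-count argument converts the column-strictness $Q(w)_{1,i} < Q(w)_{2,i}$, which holds because $Q(w)$ is standard, into the inequality $v_j \leq j - c_j$ for all $j$; the $y$-inequality follows by transposing.

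For the converse, assume both partial sum conditions hold. Apply the extended Viennot construction to $\RRR$ to obtain tableaux $(P', Q')$ of common shape $\mu$ with $|\mu| = |\RRR|$. Build candidate tableaux $(P, Q)$ by prepending first rows: row $1$ of $P$ is $[n]$ minus the $y$-coordinates of $\RRR$, and row $1$ of $Q$ is $[n]$ minus the $x$-coordinates of $\RRR$, each arranged in increasing order. The partial sum at $j = n$ forces $\mu_1 \leq n - |\RRR|$, so $(n - |\RRR|, \mu_1, \mu_2, \dots)$ is a partition; the remaining standardness reduces to the columnwise comparisons between the new first row and the top row of $P'$ (resp.\ $Q'$), which by the cumulative-count reformulation are exactly the $y$ (resp.\ $x$) partial sum conditions. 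Let $w \in \symm_n$ be the Schensted preimage of $(P, Q)$. Applying Theorem~\ref{viennot-theorem} to $w$, the second-level shadow lines recover $(P', Q')$, so $(P(\SSS(w)), Q(\SSS(w))) = (P(\RRR), Q(\RRR))$; bijectivity of the Viennot construction on rook placements then forces $\SSS(w) = \RRR$.

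The main obstacle is justifying the extension of Viennot's procedure to arbitrary rook placements together with the structural identification that $(P(\SSS(w)), Q(\SSS(w)))$ is obtained from $(P(w), Q(w))$ by deleting the first row of each; once these facts are in hand the proof reduces to the elementary identity $x_1 + \cdots + x_j = v_j + c_j - j$, which converts the partial sum conditions into column-strictness of the top two rows of the Schensted tableaux.
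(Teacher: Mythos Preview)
Your proposal is correct and follows essentially the same approach as the paper: both directions hinge on identifying the Viennot tableaux $(P',Q')$ of $\RRR$ with the Schensted tableaux $(P(w),Q(w))$ minus their first rows, and then translating standardness of the augmented tableaux into the prefix-sum conditions. Your write-up is in fact more explicit than the paper's, which asserts the equivalence between standardness and nonpositive prefix sums without spelling out the identity $x_1+\cdots+x_j = v_j + c_j - j$ or the bijectivity of Viennot's construction on rook placements; the paper simply states ``Since $P(w)$ and $Q(w)$ are standard all prefix sums \dots are nonpositive'' and, for the converse, ``Viennot's Theorem~\ref{viennot-theorem} implies that $\SSS(w)=\RRR$.''
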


\begin{proof}
Suppose $\RRR = \SSS(w)$ is the shadow set of a permutation $w \in \symm_n$. If $w \mapsto (P(w), Q(w))$ under the Schensted correspondence,
the horizontal rays of $L_1, \dots, L_r$ give the second row of $P(w)$ and the vertical rays of $L_1, \dots, L_r$ give the second row of $Q(w)$.
The $y$-coordinates which do not appear in $\RRR$ give the first row of $P(w)$ and the $x$-coordinates which do not appear in $\RRR$ give the first row of $Q(w)$.
Since $P(w)$ and $Q(w)$ are standard all prefix sums of the sequences $x_1 x_2 \dots x_n$ and $y_1 y_2 \dots y_n$ are nonpositive.

Now assume that all prefix sums of $x_1 x_2 \dots x_n$ and $y_1 y_2 \dots y_n$ are nonpositive. We may apply Viennot's construction to the set $\RRR$ to get a 
pair $(P',Q')$ of partial standard tableaux where the entries of $P'$ are the $y$-coordinates in $\RRR$
and the entries of $Q'$ are the $x$-coordinates in $\RRR$.
By the assumption on prefixes, the tableaux $P$ and $Q$ obtained by adding a first row to $P$ and $Q$ consisting of those $y$-coordinates and $x$-coordinates which
do not appear in $\RRR$ (respectively) are both standard.  If we let $w \in \symm_n$ be the unique permutation such that $w \mapsto (P,Q)$, 
Viennot's Theorem~\ref{viennot-theorem} implies that $\SSS(w) = \RRR$.
\end{proof}

An example may  help in understanding Lemma~\ref{ballot-criterion-lemma} and its proof. Let $n = 8$ and let $\RRR$ be the rook placement
\begin{equation*}
\SSS = \{ (2,8), (3,7), (5,3), (6,5), (7,6) \}
\end{equation*}
of size 5.
Applying the Viennot shadow line construction to $\RRR$ yields
\begin{center}
\begin{tikzpicture}[scale = 0.5]

\draw (1,1) grid (8,8);

\draw[very thick] (2,9) -- (2,8) -- (3,8) -- (3,7) -- (5,7) -- (5,3) -- (9,3);

\draw[very thick] (6,9) -- (6,5) -- (9,5);

\draw[very thick] (7,9) -- (7,6) -- (9,6);

\node at (2,8) {${\color{red} \bullet}$};
\node at (3,7) {${\color{red} \bullet}$};
\node at (5,3) {${\color{red} \bullet}$};
\node at (6,5) {${\color{red} \bullet}$};
\node at (7,6) {${\color{red} \bullet}$};

\node at (1,10) {$-$};
\node at (2,10) {$+$};
\node at (3,10) {$0$};
\node at (4,10) {$-$};
\node at (5,10) {$0$};
\node at (6,10) {$+$};
\node at (7,10) {$+$};
\node at (8,10) {$-$};

\node at (10,1) {$-$};
\node at (10,2) {$-$};
\node at (10,3) {$+$};
\node at (10,4) {$-$};
\node at (10,5) {$+$};
\node at (10,6) {$+$};
\node at (10,7) {$0$};
\node at (10,8) {$0$};
\end{tikzpicture}
\end{center}
where the sequences $x_1 x_2 \dots x_8$ and $y_1 y_2 \cdots y_8$ in $\{1,0,-1\}$ are shown horizontally and vertically, respectively.
A $+1$ in a given row (or column) corresponds to an infinite ray of a shadow line, a 0 corresponds to a shadow line segment which is not an infinite ray, and a $-1$ corresponds
to that row (or column) not containing an element of $\RRR$.
We have $x_1 + x_2 + \cdots + x_7 = 1 > 0$, so by Lemma~\ref{ballot-criterion-lemma} the set $\RRR$ is not the shadow set of a permutation in $\symm_8$.
Indeed, applying Schensted insertion to the rook placement $\RRR$ yields the pair of tableaux
$P'$ and $Q'$ given by
\begin{equation*}
\begin{young}
 3 & 5 & 6 \cr 7 \cr 8
\end{young} \quad \quad \text{and} \quad \quad
\begin{young}
2 & 6 & 7 \cr 3 \cr 5
\end{young}
\end{equation*}
(respectively)
and adding the row $\begin{young} 1 & 4 & 8 \end{young}$ corresponding to the positions of the $-1$'s in the sequence $x_1 x_2 \dots x_8$
to the top row of $Q'$ would not yield a standard tableau.

\subsection{Shadow monomials and spanning}
Our next task is to convert the combinatorics of the previous subsection into a spanning set for the quotient ring $\FFF[\xx_{n \times n}] / I_n$.
Given any  set
$\SSS \subseteq [n] \times [n]$ of grid points,
let $m(\SSS) = \prod_{(i,j) \in \SSS} x_{i,j}$ be the corresponding squarefree monomial in $\FFF[\xx_{n \times n}]$.

\begin{lemma}
\label{squarefree-span-lemma}
The family of monomials $m(\RRR)$ corresponding to rook placements $\RRR \subseteq [n] \times [n]$ descends to a spanning
set of $\FFF[\xx_{n \times n}]/I_n$.
\end{lemma}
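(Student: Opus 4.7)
The plan is to observe that this lemma is essentially immediate from only the first two families of generators of $I_n$, namely the quadratic monomial relations; the linear row/column sum generators and the more subtle injection relations of Lemma~\ref{injective-lemma} play no role here.

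First I would note that the generators $x_{i,j} \cdot x_{i,j'}$ with $1 \leq j, j' \leq n$ include the diagonal case $j = j'$, which gives $x_{i,j}^2 \in I_n$ for every variable. Hence every non-squarefree monomial in $\FFF[\xx_{n \times n}]$ lies in $I_n$. Next, by definition of the row-product and column-product generators, any squarefree monomial that contains two distinct variables $x_{i,j}$ and $x_{i,j'}$ from the same row, or two distinct variables $x_{i,j}$ and $x_{i',j}$ from the same column, is divisible by a generator of $I_n$ and hence lies in $I_n$.

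It follows that every monomial $m \in \FFF[\xx_{n \times n}]$ is congruent modulo $I_n$ either to $0$ or to a squarefree monomial $m(\SSS)$ whose support $\SSS \subseteq [n] \times [n]$ contains at most one point in each row and each column, i.e.\ a rook placement. Since the set of all monomials spans $\FFF[\xx_{n \times n}]$ as an $\FFF$-vector space, its image spans $\FFF[\xx_{n \times n}]/I_n$, and by the previous observation this image is contained in the $\FFF$-span of $\{m(\RRR) + I_n : \RRR \text{ a rook placement}\}$. This proves the lemma.

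There is no real obstacle here: the argument is a one-step reduction using only the monomial generators of $I_n$. The harder work, which I would expect to appear in subsequent lemmas, is cutting this spanning set down further using the row-sum, column-sum, and injection relations to obtain a basis indexed by shadow sets $\SSS(w)$.
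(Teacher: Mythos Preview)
Your proof is correct and follows essentially the same approach as the paper: the paper's proof is a single sentence observing that the generating set of $I_n$ contains all squares $x_{i,j}^2$ and all products of two variables in the same row or column, which is exactly the reduction you carry out in detail.
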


\begin{proof}
This is immediate from the fact that generating set of
 $I_n$ contains all squares $x_{i,j}^2$ of variables and all products of two variables in a given row or column.
\end{proof}

The spanning set of Lemma~\ref{squarefree-span-lemma} is far from a basis.
In order to extract a basis from this spanning set, we introduce a strategic term order.
Recall that the {\em lexicographical order} on monomials in an ordered set of variables $y_1 > y_2 > \cdots > y_N$ is given by
$y_1^{a_1} \cdots y_N^{a_N} < y_1^{b_1} \cdots y_N^{b_N}$ if there exists $1 \leq j \leq N$ with $a_i = b_i$ for $i < j$ and $a_j < b_j$.

\begin{defn}
\label{toeplitz-defn}
The {\em Toeplitz term order} $<_\Top$ on monomials in $\FFF[\xx_{n \times n}]$ is the lexicographical term order with respect to the order on variables given by
\begin{equation}
x_{1,1} > x_{2,1} > x_{1,2} > x_{3,1} > x_{2,2} > x_{1,3} > \cdots > x_{n,n-1} > x_{n-1,n} > x_{n,n}.
\end{equation}
\end{defn}

Roughly speaking, the Toeplitz term order weights a variable $x_{a,b}$ heavier than $x_{c,d}$ whenever $a + b < c + d$, and then breaks ties lexicographically. 
In fact, this tie breaking process among variables $x_{i,j}$ with $i + j$ constant will be irrelevant for the arguments that follow; all that is important is the relative weight of 
the variables $x_{i,j}$ for which $i + j$ differs.
The word ``Toeplitz" comes from Toeplitz matrices (which are constant along diagonals).
Since all of the relations we apply will be homogeneous, we could have also defined $<_\Top$ by ordering by total degree first, and then using the lexicographical order with
respect to
the indicated variable order to break ties.

\begin{defn}
\label{shadow-defn}
Let $w \in \symm_n$.  The {\em shadow monomial} $\sss(w) \in \FFF[\xx_{n \times n}] / I_n$ is the squarefree monomial corresponding to the shadow set of $w$.
In symbols, we have $\sss(w) = m(\SSS(w))$.
\end{defn}

For example, if $w = [4,1,8,5,3,6,2,7] \in \symm_8$ we have $\SSS(w) = \{ (2,4), (4,8), (5,5), (7,3) \}$ so that 
$\sss(w) = x_{2,4} \cdot x_{4,8} \cdot x_{5,5} \cdot x_{7,3}$.
Our next lemma shows  that the shadow monomials of permutations span the quotient ring $\FFF[\xx_{n \times n}] / I_n$.  The key tools in its proof
are the relations in $\FFF[\xx_{n \times n}]/I_n$ coming from
Lemma~\ref{injective-lemma} and the characterization 
(Lemma~\ref{ballot-criterion-lemma}) of when a rook placement monomial $m(\RRR)$ is the shadow monomial
$\sss(w)$
of a permutation $w \in \symm_n$.
To begin, we record the $<_\Top$-leading terms of the elements of $I_n$ appearing in Lemma~\ref{injective-lemma}.

\begin{observation}
\label{leading-observation}
Let $S = \{s_1 < \cdots < s_p \}$ and $T = \{t_1 < \cdots < t_q \}$ be subsets of $[n]$ with $p \leq q$. Then
\begin{equation}
\initial_{<_\Top}(a_{S,T}) = x_{s_1,t_1} x_{s_2,t_2} \cdots x_{s_p,t_p} \quad \quad \text{and} \quad \quad
\initial_{<_\Top}(b_{S,T}) = x_{t_1,s_1} x_{t_2,s_2} \cdots x_{t_p,s_p}.
\end{equation}
\end{observation}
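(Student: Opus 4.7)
The plan is to argue by induction on $p = |S|$, using the specific shape of the Toeplitz term order to identify the leading variable at each stage. The key preliminary observation is that among all variables $x_{i,j}$ which appear in some monomial of $a_{S,T}$ (namely those with $i \in S$ and $j \in T$), the variable $x_{s_1, t_1}$ is strictly highest-ranked under $<_\Top$. Indeed, Definition~\ref{toeplitz-defn} orders variables primarily by the sum $i+j$ (heaviest for smallest sum), and since $s_k + t_\ell \geq s_1 + t_1$ with equality only when $k = \ell = 1$, the variable $x_{s_1, t_1}$ uniquely attains the minimum sum among variables in $a_{S,T}$; in particular the tie-breaking rule within a fixed sum is never invoked.

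The base case $p = 1$ is immediate: $a_{S,T} = x_{s_1, t_1} + x_{s_1, t_2} + \cdots + x_{s_1, t_q}$, whose lex-leading monomial is $x_{s_1, t_1}$ by the observation above. For the inductive step, notice that the monomial $m_f := \prod_{i \in S} x_{i, f(i)}$ contains the variable $x_{s_1, t_1}$ if and only if $f(s_1) = t_1$. Since $x_{s_1, t_1}$ is the highest-ranked variable appearing in $a_{S,T}$ and $<_\Top$ is lexicographic, any monomial containing $x_{s_1, t_1}$ exceeds any monomial which does not. Therefore the $<_\Top$-leading monomial of $a_{S,T}$ must come from an injection $f$ with $f(s_1) = t_1$. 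For such $f$, we have $m_f = x_{s_1, t_1} \cdot m_{f'}$, where $f' := f|_{S \setminus \{s_1\}}$ ranges freely over injections $S \setminus \{s_1\} \hookrightarrow T \setminus \{t_1\}$. Applying the inductive hypothesis to $a_{S \setminus \{s_1\},\, T \setminus \{t_1\}}$ identifies the leading $m_{f'}$ as $x_{s_2, t_2} \cdots x_{s_p, t_p}$, yielding the claimed expression for $\initial_{<_\Top}(a_{S,T})$.

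For $b_{S,T}$, the argument is essentially identical with the roles of row and column indices swapped: the variables appearing are $x_{t_\ell, s_k}$, the one of smallest sum is again uniquely $x_{t_1, s_1}$, and the same induction produces $x_{t_1, s_1} x_{t_2, s_2} \cdots x_{t_p, s_p}$. I do not anticipate any genuine obstacle here; the whole content is that the extremal variable on the smallest anti-diagonal intersecting $S \times T$ (resp.\ $T \times S$) is unique, which lets the induction peel off factors of the leading monomial one at a time down the staircase $(s_1, t_1), (s_2, t_2), \ldots, (s_p, t_p)$.
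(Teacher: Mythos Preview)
Your proof is correct. The paper itself offers no formal argument for this statement---it is labeled an Observation and accompanied only by the one-line remark that the leading monomial ``corresponds to the injection $S \hookrightarrow T$ which assigns the elements of $S$ to the smallest $|S|$ elements of $T$ in an order-preserving fashion,'' together with a worked example; your induction on $p$ is a clean formalization of exactly this intuition.
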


In other words, the leading monomials 
of $a_{S,T}$ and $b_{S,T}$ correspond to the injection $S \hookrightarrow T$ which assigns the elements of $S$ to the smallest $|S|$ elements of $T$
in an order-preserving fashion. If $S = \{2,4\}$ and $T = \{1,4,5\}$ then $a_{S,T}$ given by
\begin{equation*}
a_{S,T} =  \underline{x_{2,1} x_{4,4}} + x_{2,4} x_{4,1} + x_{2,1} x_{4,5} + x_{2,5} x_{4,1} + x_{2,4} x_{4,5} + x_{2,5} x_{4,4}
\end{equation*}
with its $<_\Top$-leading term underlined.
We have all the pieces we need to prove our spanning result.

\begin{lemma}
\label{spanning-lemma}
The shadow monomials $\{ \sss(w) \,:\, w \in \symm_n \}$ descend to a spanning set of the quotient ring $\FFF[\xx_{n \times n}] / I_n$.
\end{lemma}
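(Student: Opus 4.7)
The plan is a Gr\"obner-style straightening in the Toeplitz order $<_\Top$. By Lemma~\ref{squarefree-span-lemma} the rook placement monomials already span $\FFF[\xx_{n \times n}]/I_n$, so it suffices to show that each $m(\RRR)$ is congruent modulo $I_n$ to an $\FFF$-linear combination of shadow monomials. I proceed by Noetherian induction on $<_\Top$: if $\RRR$ is a shadow set then $m(\RRR) = \sss(w)$ for the unique $w$ with $\SSS(w) = \RRR$, so there is nothing to do. Otherwise, the task is to produce an element $f_\RRR \in I_n$ whose $<_\Top$-leading monomial is exactly $m(\RRR)$, so that $m(\RRR)$ may be rewritten modulo $I_n$ as a linear combination of strictly $<_\Top$-smaller monomials (after clearing any non-leading terms that are divisible by $x_{i,j}^2$ or by two variables in the same row or column, using the basic generators of $I_n$). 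The inductive hypothesis then handles each of those smaller monomials.

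For the construction of $f_\RRR$ I will apply Lemma~\ref{ballot-criterion-lemma}: a non-shadow $\RRR$ has some strictly positive prefix sum, and by the row/column-transposition symmetry $x_{i,j} \leftrightarrow x_{j,i}$ of $I_n$ (used in the proof of Lemma~\ref{injective-lemma}) I may assume $x_1 + \cdots + x_i > 0$ for some $i$. This combinatorial surplus---more ``shadow-line top'' columns than empty columns in the prefix $[i]$---will be turned into subsets $S, T \subseteq [n]$ with $|S| + |T| > n$ so that Lemma~\ref{injective-lemma} produces $a_{S,T} \in I_n$, and so that Observation~\ref{leading-observation} identifies its leading monomial as $m(\RRR)$. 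The main obstacle is that Observation~\ref{leading-observation} pins the leading term down to the greedy in-order injection of $S$ into the smallest $|S|$ elements of $T$, so the extraction of $(S, T)$ from the ballot data must be delicate enough that this greedy pattern traces out precisely the (possibly non-monotonic) points of $\RRR$. When no single $a_{S, T}$ or $b_{S,T}$ accomplishes this, I plan to assemble $f_\RRR$ from a combination of several $a_{S_k, T_k}$'s indexed by the successive shadow lines of $\RRR$ under the Viennot construction, arranged so that each intermediate leading term is cancelled and $m(\RRR)$ emerges as the unique $<_\Top$-leading monomial of the aggregate.

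Once $m(\RRR) \in \initial_{<_\Top}(I_n)$ is established for every non-shadow rook placement, the identity $\initial_{<_\Top}(f_\RRR) = m(\RRR)$ yields $m(\RRR) \equiv -\sum_\sigma c_\sigma m_\sigma \pmod{I_n}$ with each $m_\sigma <_\Top m(\RRR)$, and the inductive hypothesis expresses these as $\FFF$-linear combinations of shadow monomials. Termination of the straightening in the well-founded order $<_\Top$ then completes the proof that the shadow monomials span $\FFF[\xx_{n \times n}]/I_n$.
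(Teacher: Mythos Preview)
Your overall strategy---Noetherian induction on $<_\Top$, reduction to rook placement monomials via Lemma~\ref{squarefree-span-lemma}, and invoking Lemma~\ref{ballot-criterion-lemma} to detect a ballot violation---matches the paper exactly. The gap is in the construction of $f_\RRR$. You correctly flag the obstacle (the $<_\Top$-leading term of a single $a_{S,T}$ or $b_{S,T}$ is the greedy increasing injection, which need not trace out all of $\RRR$), but your proposed workaround of combining several $a_{S_k,T_k}$'s with cancelling intermediate leading terms is both vague and not what the paper does; without a concrete mechanism for that cancellation this is not yet a proof.

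The paper's device is different and cleaner: do not try to realize all of $m(\RRR)$ as the leading term of a sum of $a$'s and $b$'s. Instead, from the minimal $a$ with $x_1 + \cdots + x_a > 0$ (so $x = a$ is the vertical ray of some shadow line $L_p$ of $\RRR$), \emph{march} down $L_p$ to a rook $(i_p,j_p) \in \RRR$, then west to a vertical segment of $L_{p-1}$, then down to a rook $(i_{p-1},j_{p-1})$, and so on, collecting an increasing chain $(i_1,j_1),\dots,(i_p,j_p) \in \RRR$. Set $\RRR' := \RRR \setminus \{(i_q,j_q)\}$ and factor $m(\RRR) = m(\RRR') \cdot \prod_q x_{i_q,j_q}$. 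Now pass to the $\bar n \times \bar n$ submatrix $\bar\xx$ of variables in rows and columns avoiding $\RRR'$, with its own copy $\bar I$ of the ideal; the map $f \mapsto m(\RRR') \cdot f$ carries $\bar I$ into $I_n$. With $S = \{j_1 < \cdots < j_p\}$ and $T = \{i_1 < \cdots < i_p, i_p+1, \dots, n\}$ minus the rows of $\RRR'$, the ballot violation at $a$ gives $|S|+|T| > \bar n$, so Lemma~\ref{injective-lemma} yields $b_{S,T} \in \bar I$, hence $m(\RRR') \cdot b_{S,T} \in I_n$. Observation~\ref{leading-observation} then identifies its $<_\Top$-leading term as $m(\RRR') \cdot x_{i_1,j_1}\cdots x_{i_p,j_p} = m(\RRR)$. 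Thus a \emph{single} $b_{S,T}$, multiplied by the monomial for the leftover rooks, does the job; the missing idea in your plan is precisely this march-and-factor step.
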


\begin{proof}
Let $\RRR \subseteq [n] \times [n]$ be a rook placement.
By Lemma~\ref{squarefree-span-lemma} it suffices to show that $m(\RRR)$ lies in the span of 
$\{ \sss(w) \,:\, w \in \symm_n \}$ modulo $I_n$.
If $\RRR = \SSS(w)$ for some permutation $w \in \symm_n$ then $m(\RRR) = \sss(w)$ and this is clear, so assume that 
$\RRR \neq \SSS(w)$ for all $w \in \symm_n$.

Apply Viennot's shadow line construction to the rook placement $\RRR$.
Let $L_1, \dots, L_r$ be the shadow lines so obtained, ordered from southwest to northeast, and let $x_1 x_2 \dots x_n$ and $y_1 y_2 \dots y_n$ 
be the sequences appearing in the statement of
Lemma~\ref{ballot-criterion-lemma}.
Since $\RRR$ is not the shadow set of a permutation, Lemma~\ref{ballot-criterion-lemma} implies that at least one of the sequences 
$x_1 x_2 \dots x_n$ and $y_1 y_2 \dots y_n$  has a prefix with a strictly positive sum.
We assume that $x_1 x_2 \dots x_n$ has a prefix with strictly positive sum; the case of $y_1 y_2 \dots y_n$ is similar. 

Choose $1 \leq a \leq n$ minimal such that 
$x_1 + x_2 + \cdots + x_a > 0$. By the minimality of $a$, we have $x_a = 1$ so that $x = a$ is the vertical ray of one of the shadow lines $L_p$ for some 
$1 \leq p \leq r$.  We define a size $p$ subset $\{ (i_1, j_1), \dots, (i_p, j_p) \} \subseteq \RRR$ as follows. 
Starting at the vertical ray of $L_p$, let $(i_p,j_p)$ be the first element of $\RRR$
encountered by marching south (in particular, we have $i_p = a$).  Now march west from $(i_p,j_p)$ until one encounters a vertical segment of the shadow line $L_{p-1}$. March south
along this segment until one reaches a point $(i_{p-1},j_{p-1}) \in \RRR$.  Now march west from $(i_{p-1},j_{p-1})$ until one encounters a vertical segment
of the shadow line $L_{p-2}$. March south along this segment until one reaches a point $(i_{p-2}, j_{p-2}) \in \RRR$. Continuing this process, we arrive at a subset 
$\{ (i_1, j_1), \dots, (i_p, j_p) \} \subseteq \RRR$ such that
\begin{itemize}
\item the point $(i_q, j_q)$ lies on the shadow line $L_q$ for each $1 \leq q \leq p$,
\item we have $i_1 < \cdots < i_p$, and 
\item we have $j_1 < \cdots < j_p$.
\end{itemize}
Let $\RRR' := \RRR - \{ (i_1, j_1), \dots, (i_p, j_p) \}$ be the complement of $\{ (i_1, j_1), \dots, (i_p, j_p) \}$ in $\RRR$.

An example may help in understanding these constructions.  
Let $n = 11$ and consider the rook placement
$$ 
\RRR = \{ (2,9), (3,8), (4,3), (6,2), (7,6), (8,7), (9,5), (11,11) \}. 
$$
By Lemma~\ref{ballot-criterion-lemma}, the rook placement $\RRR$ is not the shadow set of a permutation
in $\symm_8$ because $x_1 + x_2 + \cdots + x_8 = 1 > 0$. 
Furthermore, this is the smallest positive sum prefix of the word $x_1 x_2 \dots x_{11}$.
We conclude that $a = 8$.
Our marching procedure on the shadow line diagram of $\RRR$ is shown in dashed and blue as follows.
\begin{center}
\begin{tikzpicture}[scale = 0.5]

\draw (0,0) grid (10,10);

\draw[very thick] (1,11) -- (1,8) -- (2,8) -- (2,7) -- (3,7) -- (3,2) -- (5,2) -- (5,1) -- (11,1);

\draw[very thick] (6,11) -- (6,5) -- (8,5) -- (8,4) -- (11,4);

\draw[very thick] (7,11) -- (7,6) -- (11,6);

\draw[very thick] (10,11) -- (10,10) -- (11,10);

\draw[line width = 3pt, dashed, color = blue] (7,11) -- (7,6) -- (6,6) -- (6,5) -- (3,5) -- (3,2);

\node at (1,8) {${\color{red} \bullet}$};
\node at (2,7) {${\color{red} \bullet}$};
\node at (3,2) {${\color{red} \bullet}$};
\node at (6,5) {${\color{red} \bullet}$};
\node at (7,6) {${\color{red} \bullet}$};
\node at (8,4) {${\color{red} \bullet}$};
\node at (5,1) {${\color{red} \bullet}$};
\node at (10,10) {${\color{red} \bullet}$};

\node at (0,12) {$-$};
\node at (1,12) {$+$};
\node at (2,12) {$0$};
\node at (3,12) {$0$};
\node at (4,12) {$-$};
\node at (5,12) {$0$};
\node at (6,12) {$+$};
\node at (7,12) {$+$};
\node at (8,12) {$0$};
\node at (9,12) {$-$};
\node at (10,12) {$+$};

\node at (12,0) {$-$};
\node at (12,1) {$+$};
\node at (12,2) {$0$};
\node at (12,3) {$-$};
\node at (12,4) {$+$};
\node at (12,5) {$0$};
\node at (12,6) {$+$};
\node at (12,7) {$0$};
\node at (12,8) {$0$};
\node at (12,9) {$-$};
\node at (12,10) {$+$};
\end{tikzpicture}
\end{center}
We conclude that $(i_1, j_1) = (4,3), (i_2, j_2) = (7,6),$ and $(i_3, j_3) = (8,7)$.  Furthermore, we have the set
\begin{equation*}
\RRR' = \RRR - \{ (i_1,j_1), (i_2,j_2), (i_3,j_3) \} = \{ (2,9), (3,8), (6,2),  (9,5), (11,11) \}
\end{equation*}
of rooks in $\RRR$ which are not visited by the dashed blue line.

Consider the squarefree monomial $m(\RRR')$ corresponding to the rooks in $\RRR' \subseteq \RRR$ which are not reached by our marching procedure.
The ideal
$m(\RRR') \cdot \FFF[\xx_{n \times n}]/I_n$  generated by $m(\RRR')$ in the ring $\FFF[\xx_{n \times n}]/I_n$ admits a morphism from a smaller quotient of the same form.
More precisely, let  $\bar{n} := n - |\RRR'|$ and let $\bar{\xx}$ be the $\bar{n} \times \bar{n}$ matrix of variables
\begin{equation}
\bar{\xx} = \{ x_{i,j} \,:\, \text{neither the vertical line $x = i$ nor the horizontal line $y = j$ meet the set $\RRR'$} \}.
\end{equation}
In our example above, the matrix $\bar{\xx}$ consists of the variables $x_{i,j}$ indexed by $i \in \{1,4,5,7,8,10\}$ and $j \in \{1,3,4,6,7,10\}$.
Let $\FFF[\bar{\xx}]$ be the polynomial ring over the variables in $\bar{\xx}$ and let $\bar{I} \subseteq \FFF[\bar{\xx}]$ be the natural copy of
the ideal $I_{\bar{n}}$ in the square variable matrix $\bar{\xx}$.
The map
\begin{equation}
\label{redundant-isomorphism}
\varphi: \FFF[\bar{\xx}] / \bar{I}  \longrightarrow m(\RRR') \cdot \FFF[\xx_{n \times n}]/I_n
\end{equation}
induced by $f \mapsto m(\RRR') \cdot f$ is easily seen to be a (well-defined) homomorphism of $\FFF[\bar{\xx}]$-modules; one simply checks that for any generator
$g \in \FFF[\bar{\xx}]$ of $\bar{I}$, we have $m(\RRR') \cdot g \in I_n$.
We consider the sets
\begin{equation}
T := \{ i_1 < i_2 < \cdots < i_p < i_p + 1 < i_p + 2 < \cdots < n \} - \{ i \,:\, (i,j) \in \RRR' \text{ for some $j$} \}
\end{equation}
and
\begin{equation}
S := \{ j_1 < j_2 < \cdots < j_p \}.
\end{equation}
In our example we have $T = \{4,7,8,10\}$ and $S = \{3,6,7\}$.

By the definitions of $S$ and $T$,
 the polynomial $b_{S,T} \in \FFF[\bar{\xx}]$ does not involve any of the 
 variables which share a row or column with a rook $(i,j) \in \RRR'$ which is not visited 
by our marching procedure.
Since $i_p = a$ and we have the prefix inequality $x_1 + x_2 + \cdots + x_a > 0$, we have $|S| + |T| > \bar{n}$. 
Lemma~\ref{injective-lemma} applies to give 
\begin{equation}
b_{S,T} \in \bar{I}.
\end{equation}
Since the map $\varphi$ of \eqref{redundant-isomorphism} is a homomorphism of $\FFF[\bar{\xx}]$-modules we obtain
\begin{equation}
\label{clincher-containment}
\varphi(b_{S,T}) = 
m(\RRR') \cdot b_{S,T} \in I_n.
\end{equation}
Observation~\ref{leading-observation} implies that the Toeplitz-leading term of $m(\RRR') \cdot b_{S,T}$ is $m(\RRR)$, so the
 membership \eqref{clincher-containment} yields
\begin{equation}
m(\RRR) \equiv \Sigma \mod I_n
\end{equation}
where $\Sigma$ is a $\FFF$-linear combination of monomials which are $<_\Top m(\RRR)$. By induction on the Toeplitz order, 
the lemma is proven.
\end{proof}

Lemma~\ref{spanning-lemma} (and its proof) give a Gr\"obner basis for the ideal $I_n \subseteq \FFF[\xx_{n \times n}]$ with respect to the Toeplitz order
which consists of
\begin{itemize}
\item any product of two variables in $\xx_{n \times n}$ which lie in the same row or column, and
\item in the notation of the proof of Lemma~\ref{spanning-lemma}, and polynomial of the form $m(\RRR') \cdot b_{S,T}$ for a rook placement
$\RRR \subseteq [n] \times [n]$ which is not the shadow set of a permutation $w \in \symm_n$ for which some prefix of the word $x_1 x_2 \dots x_n$ is positive,
and the image of $m(\RRR') \cdot b_{S,T}$ under the involution $\FFF[\xx_{n \times n}]$ which interchanges $x_{i,j}$ and $x_{j,i}$.
\end{itemize}
This Gr\"obner basis is far from minimal.
We leave the computation of a minimal (or reduced) Gr\"obner basis of $I_n$ as an open problem.

\subsection{Standard monomial basis and Hilbert series}
Lemma~\ref{spanning-lemma} bounds the quotient ring $\FFF[\xx_{n \times n}]/I_n$ from above by giving an $\FFF$-linear spanning set. In this subsection we use orbit harmonics to bound this quotient 
from below.

Let $\FFF^{n \times n}$ be the affine space of $n \times n$ matrices over $\FFF$ with coordinate ring $\FFF[\xx_{n \times n}]$.
Write $P_n \subseteq \FFF^{n \times n}$ for the locus of permutation matrices. That is, the set $P_n$ consists of
 0,1-matrices with a unique 1 in each row and column.
The vanishing ideal $\II(P_n) \subseteq \FFF[\xx_{n \times n}]$ of the permutation matrix locus is generated by
\begin{itemize}
\item $x_{i,j}^2 - x_{i,j}$ for all $1 \leq i, j \leq n$,
\item $x_{i,j} \cdot x_{i',j}$ for all $1 \leq i < i' \leq n$ and $i \leq j \leq n$,
\item $x_{i,j} \cdot x_{i,j'}$ for all $1 \leq i \leq n$ and $1 \leq j < j' \leq n$,
\item $x_{i,1} + \cdots + x_{i,n} - 1$ for all $1 \leq i \leq n$, and
\item $x_{1,j} + \cdots + x_{n,j} - 1$ for all $1 \leq j \leq n$.
\end{itemize}
Indeed, the generators in the first bullet point come from the $(i,j)$-entry of a permutation matrix being 0 or 1, the generators in the second and third bullet points come from products of distinct entries in a row or column of a permutation matrix vanishing, and the generators in the fourth and fifth bullet points come from the row and columns summing to 1.
Comparing these generators with Definition~\ref{i-defn}, we get the containment
\begin{equation}
\label{ideal-containment}
I_n \subseteq \gr \, \II(P_n).
\end{equation}
Although the highest degree components $\tau(g_1), \dots, \tau(g_r)$ of a generating set $\{ g_1, \dots, g_r \}$ of an ideal $I$
are in general insufficient to generate $\gr \, I$, in our case
 the containment \eqref{ideal-containment} is an equality.

\begin{theorem}
\label{vector-space-structure}
We have the equality of ideals $I_n = \gr \, \II(P_n)$ of $\FFF[\xx_{n \times n}]$. Furthermore, the set $\{ \sss(w) \,:\, w \in \symm_n \}$
of shadow monomials of permutations in $\symm_n$ descends to a basis of $\FFF[\xx_{n \times n}]/I_n$.
This is the standard monomial basis of $\FFF[\xx_{n \times n}]/I_n$ with respect to the Toeplitz term order $<_\Top$.
\end{theorem}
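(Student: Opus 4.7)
The plan is to bracket $\dim_\FFF \FFF[\xx_{n \times n}]/I_n$ above by the spanning set of Lemma~\ref{spanning-lemma} and below by orbit harmonics applied to $P_n$, then to extract the standard monomial statement from the triangular nature of the reductions already carried out in the proof of Lemma~\ref{spanning-lemma}.

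To set up the lower bound, I would invoke the containment $I_n \subseteq \gr\,\II(P_n)$ from \eqref{ideal-containment}, which induces a surjection $\FFF[\xx_{n \times n}]/I_n \twoheadrightarrow \FFF[\xx_{n \times n}]/\gr\,\II(P_n)$. Orbit harmonics gives the vector space isomorphism $\FFF[\xx_{n \times n}]/\gr\,\II(P_n) \cong \FFF[P_n]$, which has dimension $|P_n| = n!$. Combining this with the upper bound $\dim \FFF[\xx_{n \times n}]/I_n \leq |\{\sss(w) : w \in \symm_n\}| \leq n!$ from Lemma~\ref{spanning-lemma} forces $\dim \FFF[\xx_{n \times n}]/I_n = n!$ and yields two conclusions. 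First, the $n!$ shadow monomials must be pairwise distinct and therefore descend to a basis of the quotient; this is consistent with the independent fact (embedded in the proof of Lemma~\ref{ballot-criterion-lemma}) that $w$ is recoverable from $\SSS(w)$. Second, the surjection to the orbit harmonics quotient is between spaces of equal finite dimension and hence an isomorphism, yielding $I_n = \gr\,\II(P_n)$.

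For the standard monomial claim, I would return to the proof of Lemma~\ref{spanning-lemma} and extract its essential triangular content: every monomial $m \in \FFF[\xx_{n \times n}]$ is congruent modulo $I_n$ to an $\FFF$-linear combination of shadow monomials $\sss(w)$ with $\sss(w) \leq_\Top m$. (If $m$ is not a rook-placement monomial then $m \equiv 0$; if it is, Lemma~\ref{injective-lemma} together with Observation~\ref{leading-observation} drives the reduction strictly downward in $<_\Top$.) Now assume for contradiction that $\sss(w) = \initial_{<_\Top}(f)$ for some nonzero $f \in I_n$, and write $f = c \cdot \sss(w) + g$ with $c \in \FFF^\times$ and $\initial_{<_\Top}(g) <_\Top \sss(w)$. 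The congruence $c \cdot \sss(w) \equiv -g \pmod{I_n}$ combined with the triangular reduction of $-g$ rewrites $c \cdot \sss(w)$ as a combination of shadow monomials strictly $<_\Top$-smaller than $\sss(w)$, contradicting the linear independence of shadow monomials in $\FFF[\xx_{n \times n}]/I_n$. Consequently no $\sss(w)$ lies in $\initial_{<_\Top}(I_n)$, so the shadow monomials are all standard, and since both the shadow monomial basis and the standard monomial basis have cardinality $n!$ they coincide.

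I do not expect any single step to present a genuine obstacle, since the bulk of the combinatorial work has already been absorbed into Lemmas~\ref{injective-lemma}, \ref{ballot-criterion-lemma}, and \ref{spanning-lemma}. The one point requiring care is verifying that the reduction of Lemma~\ref{spanning-lemma} is genuinely triangular with respect to $<_\Top$ (not merely that shadow monomials span), so that the induction underlying the standard monomial argument terminates.
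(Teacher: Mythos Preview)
Your proposal is correct and follows essentially the same approach as the paper: sandwich the dimension between $n!$ (from orbit harmonics applied to $P_n$ via the containment \eqref{ideal-containment}) and $n!$ (from the spanning Lemma~\ref{spanning-lemma}), then read off the standard monomial claim from the $<_\Top$-triangularity built into the proof of Lemma~\ref{spanning-lemma}. Your treatment of the standard monomial assertion is in fact more explicit than the paper's, which simply cites the proof of Lemma~\ref{spanning-lemma} for that step.
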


Standard monomial bases of quotient rings $\FFF[\xx]/I$ can be unpredictable, even for nicely presented ideals $I$.
However,
Theorem~\ref{vector-space-structure} informally suggests that the Toeplitz term order $<_\Top$ and the homogeneous ideal $I_n$ 
``know" the Viennot shadow line incarnation of the Schensted correspondence $w \mapsto (P(w), Q(w))$.

\begin{proof}
The chain \eqref{long-chain} of $\FFF$-vector space isomorphisms coming from orbit harmonics  reads
\begin{equation}
\FFF[P_n] \cong \FFF[\xx_{n \times n}] / \II(P_n) \cong \FFF[ \xx_{n \times n} ] / \gr \, \II(P_n).
\end{equation}
Lemma~\ref{spanning-lemma} and the containment \eqref{ideal-containment} of ideals yield the chain of (in)equalities
\begin{equation}
n! = | P_n | = \dim \FFF[ \xx_{n \times n} ] / \gr \, \II(P_n) \leq \dim \FFF[\xx_{n \times n}] / I_n \leq n!
\end{equation}
which forces $I_n = \gr \, \II(P_n)$ and $\dim \FFF[\xx_{n \times n}] / I_n = n!$.  Another application of Lemma~\ref{spanning-lemma} shows that 
the spanning set $\{ \sss(w) \,:\, w \in \symm_n \}$ of $\FFF[\xx_{n \times n}]/I_n$ is in fact a basis.
The proof of Lemma~\ref{spanning-lemma} shows that $\{ \sss(w) \,:\, w \in \symm_n \}$  is the standard monomial basis of 
$\FFF[\xx_{n \times n}]/I_n$ with respect to $<_\Top$.
\end{proof}

As a corollary, we get our promised relationship between the Hilbert series of $\FFF[\xx_{n \times n}]/I_n$ and longest increasing subsequences in permutations.

\begin{corollary}
\label{hilbert-series}
Let $a_{n,k}$ be the number of permutations in $\symm_n$ whose longest increasing sequence has length $k$. The quotient ring 
$\FFF[\xx_{n \times n}]/I_n$ has Hilbert series
\begin{equation}
\Hilb( \FFF[\xx_{n \times n}]/I_n; q) = a_{n,n} + a_{n,n-1} \cdot q + \cdots + a_{n,1} \cdot q^{n-1}.
\end{equation}
\end{corollary}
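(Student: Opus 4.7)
The plan is to read the Hilbert series directly off the shadow monomial basis furnished by Theorem~\ref{vector-space-structure}, using Lemma~\ref{shadow-size-lemma} to convert shadow set sizes into longest increasing subsequence statistics.

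First, I would invoke Theorem~\ref{vector-space-structure} to identify $\{\sss(w) : w \in \symm_n\}$ as an $\FFF$-basis of $\FFF[\xx_{n \times n}]/I_n$. Since each $\sss(w) = m(\SSS(w)) = \prod_{(i,j) \in \SSS(w)} x_{i,j}$ is a squarefree monomial, this is a \emph{graded} basis with
\begin{equation*}
\deg \sss(w) = |\SSS(w)|.
\end{equation*}

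Next, I would apply Lemma~\ref{shadow-size-lemma}, which gives $|\SSS(w)| = n - \lis(w)$, to rewrite the degree as
\begin{equation*}
\deg \sss(w) = n - \lis(w).
\end{equation*}
Consequently, the number of basis elements of degree $d$ equals the number of permutations $w \in \symm_n$ with $\lis(w) = n - d$, which by definition is $a_{n,n-d}$.

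Summing over $d$ from $0$ to $n-1$ (the range over which $\lis(w)$ varies between $1$ and $n$) gives
\begin{equation*}
\Hilb(\FFF[\xx_{n \times n}]/I_n;q) = \sum_{d=0}^{n-1} a_{n,n-d}\, q^d = a_{n,n} + a_{n,n-1}\, q + \cdots + a_{n,1}\, q^{n-1},
\end{equation*}
which is the desired identity. There is no real obstacle here; everything has been prepared by Theorem~\ref{vector-space-structure} and Lemma~\ref{shadow-size-lemma}, and the corollary is a bookkeeping consequence of the fact that the shadow monomial basis is squarefree with a degree function controlled by $\lis$.
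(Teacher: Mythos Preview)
Your proof is correct and follows exactly the paper's approach: the paper's own proof simply reads ``Combine Lemma~\ref{shadow-size-lemma} and Theorem~\ref{vector-space-structure},'' which is precisely what you have spelled out.
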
 

\begin{proof}
Combine Lemma~\ref{shadow-size-lemma} and Theorem~\ref{vector-space-structure}.
\end{proof}

\subsection{Local permutation statistics}
Corollary~\ref{hilbert-series} gives the structure of $\FFF[\xx_{n \times n}]/I_n$ as a graded vector space. Our next goal is the structure of this quotient as a graded $\symm_n \times \symm_n$ module (at least when $n! \neq 0$ in $\FFF$). Our calculation of the module structure of $\FFF[\xx_{n \times n}]/I_n$ will make crucial use of a notion of complexity on permutation statistics due to Hamaker and the author \cite{HR} called `locality'.

A {\em permutation statistic} (with values in the field $\FFF$) is a function $f: \symm_n \rightarrow \FFF$.
The study of permutation statistics is an important subfield of combinatorics.
Examples include the {\em exceedance, inversion,} and {\em peak} numbers given by
\begin{align}
\exc(w) &:= | \{ 1 \leq i \leq n \,:\, w(i) > i \} | \\
\inv(w) &:= | \{ 1 \leq i < j \leq n \,:\, w(i) > w(j) \} | \\
\peak(w) &:= | \{ 1 < i < n \,:\, w(i-1) < w(i) > w(i+1) \} |.
\end{align}

Following \cite{HR}, we define a notion of locality for permutation statistics as follows.
 If $\RRR \subseteq [n] \times [n]$ is a rook placement and $w \in \symm_n$ is a permutation, we say that 
 {\em $w$ extends $\RRR$} if we have the containment of sets $\RRR \subseteq \{ (i,w(i)) \,:\, 1 \leq i \leq n \}$.
Given a rook placement $\RRR \subseteq [n] \times [n]$, let $\one_{\RRR}: \symm_n \rightarrow \FFF$ be the indicator permutation statistic 
 \begin{equation}
 \one_{\RRR}(w) = \begin{cases}
 1 & \text{if $w$ extends $\RRR$}, \\
 0 & \text{otherwise,}
 \end{cases}
 \end{equation}
 which detects whether $w$ extends $\RRR$.
 A permutation statistic $f: \symm_n \rightarrow \FFF$ is {\em $k$-local} if there exist field elements $c_{\RRR} \in \FFF$ such that
\begin{equation}
f = \sum_{|\RRR| \, = \, k} c_{\RRR} \cdot \one_{\RRR}
\end{equation}
as functions $\symm_n \rightarrow \FFF$
where the sum is over all rook placements $\RRR \subseteq [n] \times [n]$ with $k$ rooks.

\begin{remark}
A $k$-local statistic $f: \symm_n \rightarrow \FFF$ is also known to have `degree at most $k$' elsewhere in the literature, e.g. \cite{DFLLV}.
We avoid this terminology to guard against confusion with the degree of a character.
\end{remark}

Roughly speaking,
the locality of a permutation statistic bounds its complexity.
The only 0-local statistics are constant functions $\symm_n \rightarrow \FFF$.
The statistic $\exc$ is 1-local, the statistic $\inv$ is 2-local, and the statistic $\peak$ is 3-local.
Following Hamaker and the author \cite{HR}, we consider the $\FFF$-vector space
\begin{equation}
\Loc_k(\symm_n,\FFF) := \{ f: \symm_n \rightarrow \FFF \,:\, f \text{ is $k$-local} \}
\end{equation}
of $k$-local statistics on $\symm_n$.
It is not hard to see that any $k$-local statistic is also $(k+1)$-local, so that 
$\Loc_k(\symm_n,\FFF) \subseteq \Loc_{k+1}(\symm_n,\FFF)$.
Furthermore, any permutation statistic $\symm_n \rightarrow \FFF$ is $(n-1)$-local. The vector spaces $\Loc_k(\symm_n,\FFF)$ will play an important role in the 
module structure of $\FFF[\xx_{n \times n}]/I_n$ (Theorem~\ref{graded-module-structure}); for now we use shadow monomials to solve an open problem from \cite{HR} about 
the spaces $\Loc_k(\symm_n,\FFF)$ themselves.

By definition, the set $\{ \one_{\RRR} \,:\, | \RRR | = k \}$ of indicator statistics corresponding to rook placements $\RRR \subseteq [n] \times [n]$ of size $k$
is a spanning set of  $\Loc_k(\symm_n,\FFF)$, but this spanning set is almost always
linearly dependent.
In \cite[Cor. 4.7]{HR} it is proven that when $\FFF = \mathbb{R}$ is the field of real numbers,
the dimension of $\Loc_k(\symm_n,\FFF)$ equals to the number $a_{n,n-k} + \cdots + a_{n,n-1} + a_{n,n}$ of
permutations in $\symm_n$ which have an increasing subsequence of length at least $n-k$. The methods of \cite{HR} apply whenever $\FFF$ has characteristic 0 or 
characteristic $p > n$; we will see (Theorem~\ref{local-basis-theorem}) that this is true over any field.

The paper \cite{HR} did not give an explicit basis of the space of $k$-local statistics consisting of statistics of the form $\one_\RRR$;
we solve this problem in Theorem~\ref{local-basis-theorem} below.  
Although the members $\one_\RRR$ of our basis for $\Loc_k(\symm_n,\FFF)$ can correspond to rook placements with $|\RRR| < k$ in general,
we will obtain a nested family of bases for the chain of vector spaces
$\Loc_0(\symm_n,\FFF) \subseteq \Loc_1(\symm_n,\FFF) \subseteq \cdots \subseteq \Loc_{n-1}(\symm_n,\FFF)$.
To achieve these goals, we recall a standard fact about associated graded ideals.

Let $\xx$ be a finite set of variables and consider the polynomial ring $\FFF[\xx]$ over these variables.  Given $d \geq 0$ and a graded $\FFF$-algebra $A$,
let $A_{\leq d} \subseteq A$ be the subspace of elements of degree at most $d$.
We have a  filtration $\FFF[\xx]_{\leq 0} \subseteq \FFF[\xx]_{\leq 1} \subseteq \FFF[\xx]_{\leq 2} \subseteq \cdots $ of 
$\FFF[\xx]$ by finite-dimensional subspaces.

\begin{lemma}
\label{filtration-lemma}
Let $I \subseteq \FFF[\xx]$ be an ideal and let $\gr \, I \subseteq \FFF[\xx]$ be the associated graded ideal of $I$.
Fix an integer $d \geq 0$ and let $\BBB \subseteq \FFF[\xx]_{\leq d}$ be a family of homogeneous polynomials of degree at most $d$.
Suppose that $\BBB$ descends to a basis of the vector space $( \FFF[\xx] / \gr \, I )_{\leq d}$.
Then $\BBB$ descends to a basis of the vector space $\FFF[\xx]_{\leq d} / (I \cap \FFF[\xx]_{\leq d} )$.
\end{lemma}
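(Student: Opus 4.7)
The plan is induction on $d$, using the standard comparison between the filtered quotient $A := \FFF[\xx]/I$ and its associated graded $\gr\, A := \FFF[\xx]/\gr\, I$. Set $A_{\leq d} := \FFF[\xx]_{\leq d}/(I \cap \FFF[\xx]_{\leq d})$. The base case $d = 0$ is immediate since the top-degree component of a constant in $I$ is that constant itself, so $(\gr\, I)_0 = I \cap \FFF[\xx]_0$ and the hypothesis on $\BBB$ gives the conclusion directly.

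The first real step is to produce, for each $d \geq 1$, a short exact sequence
\begin{equation*}
0 \to A_{\leq d-1} \to A_{\leq d} \to (\gr\, A)_d \to 0,
\end{equation*}
where the right-hand map sends $f + (I \cap \FFF[\xx]_{\leq d})$ to the class of its top-degree component $f_d$ modulo $(\gr\, I)_d$. Injectivity of the left-hand map is immediate; the content is identifying the kernel of the right-hand map. A class $f + (I \cap \FFF[\xx]_{\leq d})$ is killed iff $f_d = \tau(h)$ for some $h \in I$ of degree exactly $d$, in which case $f \equiv f - h \pmod{I}$ and $f - h$ has filtration degree $\leq d-1$. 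The key subclaim here is that $(\gr\, I)_d$ equals $\{\tau(h) : h \in I,\ \deg h = d\} \cup \{0\}$; this follows because sums of top components of the same degree and polynomial multiples of top components are themselves top components of elements of $I$. Telescoping these short exact sequences over $d$ gives $\dim A_{\leq d} = \dim (\gr\, A)_{\leq d} = |\BBB|$.

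To establish that $\BBB$ spans $A_{\leq d}$ I would induct on $d$. Given $f \in \FFF[\xx]_{\leq d}$ with top-degree component $f_d$, the hypothesis on $\BBB$ produces scalars $c_b \in \FFF$ with
\begin{equation*}
f_d - \sum_{\substack{b \in \BBB \\ \deg b = d}} c_b \cdot b \in (\gr\, I)_d,
\end{equation*}
and by the subclaim above this difference equals $\tau(h)$ for some $h \in I$ of degree $d$. Then $f - \sum c_b \cdot b - h$ has degree at most $d-1$ and, by the inductive hypothesis, is congruent modulo $I \cap \FFF[\xx]_{\leq d-1}$ to an $\FFF$-linear combination of elements of $\BBB$ of degree $\leq d-1$. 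Combining, $f$ lies in the span of $\BBB$ modulo $I \cap \FFF[\xx]_{\leq d}$, and the dimension count from the first step upgrades this spanning set to a basis. The main obstacle is the subclaim identifying the degree-$d$ piece of $\gr\, I$ with top components of degree-$d$ elements of $I$; once that is in hand, the rest is formal manipulation of the filtration.
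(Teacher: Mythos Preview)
Your proof is correct and shares its core with the paper's: both hinge on the subclaim that $(\gr\, I)_d = \{\tau(h) : h \in I,\ \deg h = d\} \cup \{0\}$, and both establish spanning by induction on degree (the paper phrases this as a minimal-counterexample argument, but the content is identical to your inductive step). The difference lies in how linear independence is obtained. The paper argues it directly in one line: a dependence $\sum_b c_b\, b \in I$ would, upon taking the top homogeneous component, yield a dependence of $\BBB$ modulo $\gr\, I$. You instead build the short exact sequence $0 \to A_{\leq d-1} \to A_{\leq d} \to (\gr\, A)_d \to 0$, telescope to get $\dim A_{\leq d} = \dim(\gr\, A)_{\leq d} = |\BBB|$, and then let the dimension count upgrade spanning to a basis. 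Your route is slightly more machinery but yields the dimension identity as a byproduct and makes the filtered-versus-graded comparison explicit; the paper's route is shorter but leaves that structure implicit.
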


Lemma~\ref{filtration-lemma} is the heart of the orbit harmonics isomorphisms \eqref{long-chain}.
We include its straightforward proof 
for completeness.

\begin{proof}
If $\BBB$ were not linearly independent modulo $I \cap \FFF[\xx]_{\leq d}$, there would exist scalars $c_b \in \FFF$ not all zero and an element $g \in I$ with $\deg(g) \leq d$
such that $\sum_{b \in \BBB} c_b \cdot b = g$.  Since the elements of $\BBB$ are homogeneous, taking the highest degree component of both sides of this equation would result
in a linear dependence of $\BBB$ modulo $\gr \, I$, a contradiction.

If $\BBB$ did not span $\FFF[\xx]_{\leq d} / (I \cap \FFF[\xx]_{\leq d} )$, there would be some homogeneous polynomial $h \in \FFF[\xx]_{\leq d}$ such that 
$g$ does not lie in the span of $\BBB$ modulo $I \cap \FFF[\xx]_{\leq d}$.
Choose such an $h$ with $\deg(h)$ minimal. There exist scalars $c_b \in \FFF$ such that $\sum_{b \in \BBB} c_b \cdot b  = h + \tau(g)$ for some $g \in I$ with 
$\deg(g) = \deg(h)$ (so that in particular $g \in I \cap \FFF[\xx]_{\leq d}$), 
where 
$\tau(g)$ is the highest degree component of $g$. Discarding redundant terms if necessary, we may assume that $c_b = 0$ whenever $\deg(b) \neq \deg(h)$.
We conclude that $h + g - \sum_{b \in \BBB} c_b \cdot b$ has degree $< \deg(h)$, so by our choice of $h$ there exist $c'_b \in \FFF$ and $g' \in I \cap \FFF[\xx]_{\leq d}$
with 
\begin{equation*}
\sum_{b \in \BBB} c'_b \cdot b = h + g - \sum_{b \in \BBB} c_b \cdot b + g'
\end{equation*}
so that $h = \sum_{b \in \BBB} (c'_b - c_b) \cdot b - (g' + g)$ lies in the span of $\BBB$ modulo $I \cap \FFF[\xx]_{\leq d}$, a contradiction.
\end{proof}

An application of Lemma~\ref{filtration-lemma} gives a basis of the vector space $\Loc_k(\symm_n,\FFF)$.

\begin{theorem}
\label{local-basis-theorem}
The vector space $\Loc_k(\symm_n,\FFF)$ of $k$-local statistics $\symm_n \rightarrow \FFF$ has basis
\begin{equation}
\{ \one_{\SSS(w)} \,:\, w \in \symm_n, \, \, \lis(w) \geq n-k \}
\end{equation}
given by indicator functions of shadow sets of permutations in $\symm_n$ which contain an increasing subsequence of length $n-k$.
\end{theorem}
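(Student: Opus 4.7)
The plan is to use orbit harmonics and Lemma~\ref{filtration-lemma} to transfer the shadow monomial basis of Theorem~\ref{vector-space-structure} from the graded quotient $\FFF[\xx_{n \times n}]/I_n$ down to a filtered piece of $\FFF[\xx_{n \times n}]/\II(P_n) \cong \FFF[P_n]$ that corresponds to the space of $k$-local statistics.

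First I would match algebra with combinatorics. Under the evaluation isomorphism $\FFF[\xx_{n \times n}]/\II(P_n) \cong \FFF[P_n]$, the monomial $m(\RRR) = \prod_{(i,j) \in \RRR} x_{i,j}$ corresponds to the indicator statistic $\one_{\RRR}$, because $m(\RRR)$ evaluated at the permutation matrix of $w$ equals $1$ exactly when $w$ extends $\RRR$. I would then show that $\Loc_k(\symm_n,\FFF)$, viewed inside $\FFF[P_n]$ via this isomorphism, agrees with the image of the filtered piece $\FFF[\xx_{n \times n}]_{\leq k}$. One direction is clear: every degree $\leq k$ monomial reduces modulo the row and column squarefree relations of $\II(P_n)$ to an $m(\RRR)$ with $|\RRR| \leq k$ or to zero, and the characteristic-free identity
$$
\one_{\RRR} \;=\; \sum_j \one_{\RRR \, \cup \, \{(i_0, j)\}}
$$
(summing over columns $j$ disjoint from the columns of $\RRR$, with $i_0$ any row disjoint from $\RRR$) obtained by multiplying $m(\RRR)$ by $\sum_j x_{i_0,j} \equiv 1 \pmod{\II(P_n)}$ lets one iteratively rewrite $\one_{\RRR}$ for $|\RRR| < k$ as a combination of $\one_{\RRR'}$ with $|\RRR'| = k$, placing it in $\Loc_k$ regardless of the characteristic of $\FFF$. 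The reverse containment is immediate since $m(\RRR) \in \FFF[\xx_{n \times n}]_{\leq k}$ whenever $|\RRR| = k$.

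Having identified $\Loc_k(\symm_n,\FFF)$ with $\FFF[\xx_{n \times n}]_{\leq k}/(\II(P_n) \cap \FFF[\xx_{n \times n}]_{\leq k})$, I would apply Lemma~\ref{filtration-lemma} with $I = \II(P_n)$ and $d = k$. Theorem~\ref{vector-space-structure} together with Lemma~\ref{shadow-size-lemma} exhibits the shadow monomials $\sss(w) = m(\SSS(w))$ of degree $n - \lis(w) \leq k$ as a basis of the degree $\leq k$ piece of $\FFF[\xx_{n \times n}]/\gr\,\II(P_n) = \FFF[\xx_{n \times n}]/I_n$. Lemma~\ref{filtration-lemma} therefore promotes $\{\sss(w) : \lis(w) \geq n-k\}$ to a basis of the filtered piece above, and the isomorphism $\sss(w) \mapsto \one_{\SSS(w)}$ delivers the claimed basis of $\Loc_k(\symm_n,\FFF)$.

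The point requiring the most care is the identification of $\Loc_k$ with the image of $\FFF[\xx_{n \times n}]_{\leq k}$ over an arbitrary field; this replaces the dimension-counting argument of \cite{HR}, which inverts certain binomial coefficients, by the characteristic-free row-sum manipulation above. Once that is in hand, the conclusion is a direct application of the orbit harmonics and Gr\"obner framework already developed.
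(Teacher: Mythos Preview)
Your proposal is correct and follows essentially the same route as the paper: identify $\Loc_k(\symm_n,\FFF)$ with $\FFF[\xx_{n\times n}]_{\leq k}/(\II(P_n)\cap\FFF[\xx_{n\times n}]_{\leq k})$ via $\one_\RRR \leftrightarrow m(\RRR)$, then invoke Theorem~\ref{vector-space-structure} and Lemma~\ref{filtration-lemma}. The only difference is that where the paper simply asserts that $\ell$-local implies $k$-local for $\ell\leq k$ and states the isomorphism \eqref{local-isomorphism} without further comment, you supply the explicit row-sum identity $\one_\RRR=\sum_j \one_{\RRR\cup\{(i_0,j)\}}$ to justify this over arbitrary $\FFF$; this extra care is appropriate and strengthens the exposition without changing the argument.
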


Other authors (see e.g. \cite{DFLLV}) refer to the functions $\one_\RRR$ as {\em juntas}.
So Theorem~\ref{local-basis-theorem} describes a basis of {\em shadow juntas}.

\begin{proof}
For  $\ell \leq k$, any $\ell$-local permutation statistic is also $k$-local, so the indicator functions in question are members of 
$\Loc_k(\symm_n,\FFF)$ by Lemma~\ref{shadow-size-lemma}. Identifying $\symm_n = P_n$ with the locus of permutation matrices in $\FFF^{n \times n}$, the indicator
function $\one_{\RRR}$ corresponding to a rook placement $\RRR \subseteq [n] \times [n]$ is represented by the degree $|\RRR|$
 monomial $m(\RRR) \in \FFF[\xx_{n \times n}]$. It follows that we have an isomorphism
 \begin{equation}
 \label{local-isomorphism}
 \Loc_k(\symm_n,\FFF) \cong \FFF[\xx_{n \times n}]_{\leq k} / ( \II(P_n) \cap \FFF[\xx_{n \times n}]_{\leq k} )
 \end{equation}
 of $\FFF$-vector spaces given by $\one_\RRR \mapsto m(\RRR) + ( \II(P_n) \cap \FFF[\xx_{n \times n}]_{\leq k} )$. Write
 \begin{equation}
  \BBB = \{ \sss(w) \,:\, w \in \symm_n \text{ has an increasing subsequence of length at least $n-k$}  \} \subseteq \FFF[\xx_{n \times n}]
 \end{equation}
 for the set of monomials representing the indicator functions in the statement.
 Theorem~\ref{vector-space-structure} implies that $\BBB$ descends to a basis for the $\FFF$-vector space
 $( \FFF[\xx_{n \times n}] / \gr \, \II(P_n) )_{\leq k}$.  
 An application of
 Lemma~\ref{filtration-lemma} shows that $\BBB$ also descends to a basis for 
 $\FFF[\xx_{n \times n}]_{\leq k} / ( \II(P_n) \cap \FFF[\xx_{n \times n}]_{\leq k} )$, and the isomorphism \eqref{local-isomorphism} completes the proof.
\end{proof}

The nested shadow junta bases of $\Loc_0(\symm_3,\FFF) \subset \Loc_1(\symm_3,\FFF) \subset \Loc_2(\symm_3,\FFF)$ are as follows.

\begin{center}
\begin{tikzpicture}[scale = 0.5]

\draw (1,1) grid (3,3);

\draw[very thick] (1,4) -- (1,1) -- (4,1);
\draw[very thick] (2,4) -- (2,2) -- (4,2);
\draw[very thick] (3,4) -- (3,3) -- (4,3);

\node at (1,1) {$\bullet$};
\node at (2,2) {$\bullet$};
\node at (3,3) {$\bullet$};

\node at (2.5,0) {$1$};

\draw[dashed] (6,4) -- (6,0);

\draw (8,1) grid (10,3);

\draw[very thick] (8,4) -- (8,2) -- (9,2) -- (9,1) -- (11,1);
\draw[very thick] (10,4) -- (10,3) -- (11,3);

\node at (8,2) {$\bullet$};
\node at (9,1) {$\bullet$};
\node at (10,3) {$\bullet$};

\node at (9,2) {${\color{red} \bullet}$};

\node at (9.5,0) {$x_{2,2}$};

\draw (12,1) grid (14,3);

\draw[very thick] (12,4) -- (12,1) -- (15,1);
\draw[very thick] (13,4) -- (13,3) -- (14,3) -- (14,2) -- (15,2);

\node at (12,1) {$\bullet$};
\node at (13,3) {$\bullet$};
\node at (14,2) {$\bullet$};

\node at (14,3) {${\color{red} \bullet}$};

\node at (13.5,0) {$x_{3,3}$};

\draw (16,1) grid (18,3);

\draw[very thick] (16,4) -- (16,2) -- (18,2) -- (18,1) -- (19,1);
\draw[very thick] (17,4) -- (17,3) -- (19,3);

\node at (16,2) {$\bullet$};
\node at (17,3) {$\bullet$};
\node at (18,1) {$\bullet$};

\node at (18,2) {${\color{red} \bullet}$};

\node at (17.5,0) {$x_{3,2}$};

\draw (20,1) grid (22,3);

\draw[very thick] (20,4) -- (20,3) -- (21,3) -- (21,1) -- (23,1);
\draw[very thick] (22,4) -- (22,2) -- (23,2);

\node at (20,3) {$\bullet$};
\node at (21,1) {$\bullet$};
\node at (22,2) {$\bullet$};

\node at (21,3) {${\color{red} \bullet}$};

\node at (21.5,0) {$x_{2,3}$};

\draw[dashed] (25,4) -- (25,0);

\draw (27,1) grid (29,3);

\draw[very thick] (27,4) -- (27,3) -- (28,3) -- (28,2) -- (29,2) -- (29,1) -- (30,1);

\node at (27,3) {$\bullet$};
\node at (28,2) {$\bullet$};
\node at (29,1) {$\bullet$};

\node at (28,3) {${\color{red} \bullet}$};
\node at (29,2) {${\color{red} \bullet}$};

\node at (28.5,0) {$x_{2,3} \cdot x_{3,2}$};

\end{tikzpicture}
\end{center}
It may be interesting to find a basis of $\Loc_k(\symm_n,\FFF)$ drawn from the spanning set $\{ \one_\RRR \,:\, |\RRR| = k \}$.
By Theorem~\ref{vector-space-structure}, the above monomials also form a vector space basis of 
$\FFF[\xx_{3 \times 3}]/I_3$.

The results we have proven so far hold when the field $\FFF$ is replaced by a commutative ring $R$.
More precisely, we have an ideal $I_n^R \subseteq R[\xx_{n \times n}]$ with the same generating set as in 
Definition~\ref{i-defn}.
\begin{itemize}
\item
The proofs of
Lemmas \ref{injective-lemma} and \ref{spanning-lemma} goes through to show that the shadow monomials $\{ \sss(w) \,:\, w \in \symm_n \}$ span 
$R[\xx_{n \times n}]/I_n^R$ over $R$. Here we use the fact that the coefficients in the polynomials $a_{S,T}, b_{S,T}$ appearing in Lemma~\ref{injective-lemma}
are all $\pm 1$.
\item When $R = \ZZ$,
a linear dependence of $\{ \sss(w) \,:\, w \in \symm_n \}$ modulo $I_n^\ZZ$ would induce a linear dependence modulo 
$I_n^\QQ$. By Theorem~\ref{vector-space-structure}
$\{ \sss(w) \,:\, w \in \symm_n \}$ descends to a $\ZZ$-basis of 
$\ZZ[\xx_{n \times n}]/I_n^\ZZ$.  
\item
Since $R[\xx_{n \times n}]/I_n^R = R \otimes_\ZZ \ZZ[\xx_{n \times n}]/I_n^\ZZ$, the set 
$\{ \sss(w) \,:\, w \in \symm_n \}$ descends to a $R$-basis of $R[\xx_{n \times n}]/I_n^R$ for any $R$.
The proof of
Lemma~\ref{filtration-lemma} holds over $R$, so the shadow juntas  $\{ \one_{\SSS(w)} \,:\, w \in \symm_n, \, \, \lis(w) \geq n-k \}$ form an $R$-basis
of $\Loc_k(\symm_n,R)$.
\end{itemize}

\section{Module structure}
\label{Module}

As explained in the introduction,
the self-product $\symm_n \times \symm_n$ of the rank $n$ symmetric group acts on the matrix $\xx_{n \times n}$ 
of variables by independent row and column permutation. 
This induces an action of $\symm_n \times \symm_n$ on $I_n$, and endows 
$\FFF[\xx_{n \times n}]/I_n$ with the structure of a graded $\symm_n \times \symm_n$-module. The purpose of this section is to study this action.
To do so, for the remainder of the section we made the following assumption on the characteristic of $\FFF$.
\begin{quote}
{\bf Assumption.}
{\em The field $\FFF$ either has characteristic zero, or has characteristic $p > n$.}
\end{quote}
This assumption guarantees that the group algebras $\FFF[\symm_n]$ and $\FFF[\symm_n \times \symm_n]$ are semisimple.
We may immediately describe the ungraded $\symm_n \times \symm_n$-structure of $\FFF[\xx_{n \times n}]/I_n$.

\begin{corollary}
\label{ungraded-module-structure}
Let $\symm_n \times \symm_n$ act on the locus $P_n \subseteq \FFF^{n \times n}$  by independent row and column permutation.
We have an isomorphism $\FFF[P_n] \cong \FFF[\xx_{n \times n}] / I_n$ of ungraded $\symm_n \times \symm_n$-modules.
\end{corollary}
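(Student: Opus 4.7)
The plan is to obtain this corollary as a direct equivariant upgrade of the orbit harmonics chain \eqref{long-chain}, using Theorem~\ref{vector-space-structure} to identify the associated graded ideal with $I_n$.

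First I would verify that the permutation matrix locus $P_n \subseteq \FFF^{n \times n}$ is stable under the $\symm_n \times \symm_n$-action by independent row and column permutation; this is immediate since permuting rows or columns of a permutation matrix yields another permutation matrix. Consequently the induced action of $\symm_n \times \symm_n$ on $\FFF[\xx_{n \times n}]$ by linear substitutions $x_{i,j} \mapsto x_{u(i), v(j)}$ stabilizes the vanishing ideal $\II(P_n)$, and $\FFF[P_n] \cong \FFF[\xx_{n \times n}]/\II(P_n)$ is an isomorphism of $\symm_n \times \symm_n$-modules (this is the evaluation isomorphism, which intertwines the geometric action on $P_n$ with the substitution action on polynomials).

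Next I would observe that because $\symm_n \times \symm_n$ acts on $\FFF[\xx_{n \times n}]$ by degree-preserving linear substitutions, the top-degree operator $\tau$ commutes with the group action; hence $\gr \, \II(P_n)$ is a homogeneous $\symm_n \times \symm_n$-stable ideal. The second isomorphism in \eqref{long-chain}, namely $\FFF[\xx_{n \times n}]/\II(P_n) \cong \FFF[\xx_{n \times n}]/\gr \, \II(P_n)$, is the orbit harmonics isomorphism; under our standing assumption that $\FFF[\symm_n \times \symm_n]$ is semisimple, this map is equivariant. Concretely, one filters both sides by degree and uses that on each associated graded piece, the natural map $(\II(P_n) \cap \FFF[\xx_{n \times n}]_{\leq d})/(\II(P_n) \cap \FFF[\xx_{n \times n}]_{\leq d-1}) \hookrightarrow (\gr \, \II(P_n))_d$ is an equivariant inclusion; semisimplicity then forces the orbit harmonics isomorphism to hold in the category of $\FFF[\symm_n \times \symm_n]$-modules.

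Finally, I would invoke Theorem~\ref{vector-space-structure}, which gives the equality of ideals $I_n = \gr \, \II(P_n)$. Substituting this into the chain of equivariant isomorphisms yields
\begin{equation*}
\FFF[P_n] \cong \FFF[\xx_{n \times n}]/\II(P_n) \cong \FFF[\xx_{n \times n}]/\gr \, \II(P_n) = \FFF[\xx_{n \times n}]/I_n
\end{equation*}
as ungraded $\symm_n \times \symm_n$-modules, as desired.

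There is no real obstacle here; the entire content is packaged into Theorem~\ref{vector-space-structure} and the equivariance of orbit harmonics. The only point requiring any care is confirming that the standard orbit harmonics argument preserves the $\symm_n \times \symm_n$-structure, which is where the semisimplicity hypothesis (char $\FFF = 0$ or char $\FFF > n$) enters.
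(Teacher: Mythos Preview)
Your proposal is correct and follows essentially the same approach as the paper: invoke the equality $I_n = \gr\,\II(P_n)$ from Theorem~\ref{vector-space-structure}, then observe that the orbit harmonics chain \eqref{long-chain} is $\symm_n \times \symm_n$-equivariant under the semisimplicity hypothesis. The paper's proof is terser---it simply asserts that the vector space isomorphisms ``upgrade'' to module isomorphisms by the characteristic assumption---while you spell out the stability of $P_n$ and the mechanism by which semisimplicity yields equivariance, but the underlying argument is identical.
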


Corollary~\ref{ungraded-module-structure} may be given as a decomposition into $\symm_n \times \symm_n$ irreducibles as follows.
If $\lambda \vdash n$ is a partition of $n$, recall that $V^{\lambda}$ denotes the corresponding irreducible $\symm_n$-module.
Irreducible representations of the product group $\symm_n \times \symm_n$ are given by tensor products
$V^{\lambda} \otimes V^{\mu}$ for ordered pairs of partitions $(\lambda, \mu)$ of $n$. Corollary~\ref{ungraded-module-structure} asserts that 
$\FFF[\xx_{n \times n}] / I_n \cong \bigoplus_{\lambda \vdash n} V^\lambda \otimes V^\lambda$ as ungraded $\symm_n \times \symm_n$-modules. 

\begin{proof}
By Theorem~\ref{vector-space-structure} we have an isomorphism and an equality
\begin{equation}
\FFF[P_n] \cong \FFF[\xx_{n \times n}] / \gr \, \II(P_n) = \FFF[\xx_{n \times n}] / I_n
\end{equation}
of ungraded $\FFF$-vector spaces. By our assumption on the characteristic of $\FFF$, these upgrade to an isomorphism and an equality 
of ungraded $\FFF[\symm_n \times \symm_n]$-modules.
\end{proof}

We enhance Corollary~\ref{ungraded-module-structure} by describing the graded module structure of 
$\FFF[\xx_{n \times n}]/I_n$. As suggested by Corollary~\ref{hilbert-series}, the graded refinement of the isomorphism $\FFF[\xx_{n \times n}] / I_n \cong_{\symm_n \times \symm_n} \bigoplus_{\lambda \vdash n} V^\lambda \otimes V^\lambda$ is obtained by focusing on the length of the first row of $\lambda$.

\begin{theorem}
\label{graded-module-structure}
For any $k \geq 0$, the degree $k$ piece of $\FFF[\xx_{n \times n}] / I_n$ has $\symm_n \times \symm_n$-module structure
\begin{equation}
( \FFF[\xx_{n \times n}] / I_n )_k \cong \bigoplus_{\substack{\lambda \, \vdash \, n \\ \lambda_1  \, = \, n-k}} V^{\lambda} \otimes V^{\lambda}.
\end{equation}
\end{theorem}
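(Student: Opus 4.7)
The plan is to upgrade the ungraded identification of Corollary~\ref{ungraded-module-structure} by pinning down the graded degree of each irreducible summand. By Corollary~\ref{ungraded-module-structure} together with the Peter--Weyl decomposition $\FFF[\symm_n] \cong \bigoplus_{\lambda \vdash n} V^\lambda \otimes V^\lambda$, we have $\FFF[\xx_{n \times n}]/I_n \cong \bigoplus_\lambda V^\lambda \otimes V^\lambda$ as ungraded $\symm_n \times \symm_n$-modules. Each $V^\lambda \otimes V^\lambda$ is irreducible over $\symm_n \times \symm_n$ (in the assumed characteristic) and appears with multiplicity one, so each is confined to a single graded degree $d(\lambda)$; the theorem is the assertion $d(\lambda) = n - \lambda_1$.

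To identify $d(\lambda)$, I would transfer the grading to the orbit-harmonic filtration. Setting $F_k := \FFF[\xx_{n \times n}]_{\leq k}/(\II(P_n) \cap \FFF[\xx_{n \times n}]_{\leq k}) \subseteq \FFF[P_n]$, the equality $I_n = \gr \, \II(P_n)$ from Theorem~\ref{vector-space-structure} together with Lemma~\ref{filtration-lemma} yields an equivariant isomorphism $F_k \cong \bigoplus_{j \leq k}(\FFF[\xx_{n \times n}]/I_n)_j$ (the filtration splits because $\FFF[\symm_n \times \symm_n]$ is semisimple). The isomorphism \eqref{local-isomorphism} from the proof of Theorem~\ref{local-basis-theorem} identifies $F_k$ with $\Loc_k(\symm_n, \FFF) \subseteq \FFF[\symm_n]$, reducing the theorem to the bimodule identification
\begin{equation*}
\Loc_k(\symm_n, \FFF) \cong \bigoplus_{\lambda : \lambda_1 \geq n-k} V^\lambda \otimes V^\lambda.
\end{equation*}

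Proving this identification is the main obstacle. Dimensions match by Theorem~\ref{local-basis-theorem} combined with Theorem~\ref{schensted-theorem} and the RSK bijection, so it suffices to show $V^\lambda \otimes V^\lambda \subseteq \Loc_k$ for each $\lambda$ with $\lambda_1 \geq n - k$. Fix such $\lambda$, let $B_1, \ldots, B_r$ be the consecutive blocks of $[n]$ of sizes $\lambda_1, \ldots, \lambda_r$, and let $W_\lambda \subseteq \FFF[\symm_n]$ be the subspace of functions right-invariant under the Young subgroup $\symm_\lambda = \symm_{\lambda_1} \times \cdots \times \symm_{\lambda_r}$. As a left $\symm_n$-module, $W_\lambda \cong \Ind^{\symm_n}_{\symm_\lambda}(\triv) = \bigoplus_\mu K_{\mu\lambda} V^\mu$, and since the Kostka number $K_{\lambda\lambda}$ equals one, $W_\lambda$ contains an embedded copy of $V^\lambda$. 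At the same time, any element of $W_\lambda$ is a linear combination of indicators of ordered set-partitions $(S_1, \ldots, S_r)$ of $[n]$ with $|S_i| = \lambda_i$; such an indicator equals the indicator of the constraints that $w$ maps $S_i$ bijectively onto $B_i$ for $i = 2, \ldots, r$ (the $i = 1$ constraint following by elimination), and hence expands as a sum of $\one_\RRR$ for rook placements $\RRR$ of size $n - \lambda_1 \leq k$. Thus $W_\lambda \subseteq \Loc_{n - \lambda_1} \subseteq \Loc_k$. Since $\Loc_k$ is a sub-bimodule of the multiplicity-free decomposition $\FFF[\symm_n] = \bigoplus_\mu V^\mu \otimes V^\mu$ whose summands are irreducible bimodules, $\Loc_k = \bigoplus_{\mu \in S_k} V^\mu \otimes V^\mu$ for some set $S_k$; the embedded copy of $V^\lambda$ in $W_\lambda$ lies inside the left-$V^\lambda$-isotypic of $\FFF[\symm_n]$, namely $V^\lambda \otimes V^\lambda$, forcing $\lambda \in S_k$.

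Taking the graded quotient $F_k / F_{k-1} \cong \Loc_k / \Loc_{k-1}$ then yields $(\FFF[\xx_{n \times n}]/I_n)_k \cong \bigoplus_{\lambda : \lambda_1 = n - k} V^\lambda \otimes V^\lambda$, completing the proof.
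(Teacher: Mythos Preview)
Your proof is correct, and shares the paper's overall architecture: both reduce the problem to identifying the bimodule $\Loc_k(\symm_n,\FFF)$ inside $\FFF[\symm_n]$, via the filtration $F_k$ and Lemma~\ref{filtration-lemma}. The divergence is in how that identification is made. The paper observes that $\Loc_k$ coincides with the two-sided ideal $J_k$ generated by the symmetrizer $\eta_{n-k}=\sum_{w\in\symm_{n-k}}w$, pushes this through the Artin--Wedderburn isomorphism $\FFF[\symm_n]\cong\bigoplus_\lambda\End_\FFF(V^\lambda)$, and then uses the branching rule to see that $\eta_{n-k}\cdot V^\lambda\neq 0$ exactly when $\lambda_1\geq n-k$. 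Your argument instead builds an explicit left-$V^\lambda$ inside the right-$\symm_\lambda$-invariants $W_\lambda$ (via Young's rule $K_{\lambda\lambda}=1$), checks by hand that $W_\lambda\subseteq\Loc_{n-\lambda_1}$, and then closes with the dimension count from Theorem~\ref{local-basis-theorem} and RSK. Your route is more constructive and avoids Artin--Wedderburn and the branching rule, at the cost of depending on the standard monomial basis for the reverse inclusion; the paper's route is more structural and self-contained for the bimodule step. One small slip: with $W_\lambda$ defined as right-$\symm_\lambda$-invariant functions, the coset indicator detects $w(B_i)=S_i$, not $w(S_i)=B_i$; the conclusion that the indicator is a sum of $\one_\RRR$ with $|\RRR|=n-\lambda_1$ is unaffected.
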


\begin{proof}
If $W$ is any $\symm_n$-module over $\FFF$, the vector space $\End_\FFF(W)$ of $\FFF$-linear maps $\varphi: W \rightarrow W$ is a
$\symm_n \times \symm_n$-module via 
\begin{equation}
((u,v) \cdot \varphi)(w) := u \cdot \varphi(v^{-1} \cdot w)  \quad \quad \text{for all } u, v \in \symm_n, \, \varphi \in \End_\FFF(W), \, w \in W.
\end{equation}
We have $\End_\FFF(W) \cong W \otimes W^*$ and, since  $\symm_n$-modules are self-dual, we have 
\begin{equation}
\End_\FFF(W) \cong W \otimes W
\end{equation}
as $\symm_n \times \symm_n$-modules.

The group algebra $\FFF[\symm_n]$ is naturally a $\symm_n \times \symm_n$-module under left and right multiplication. Since $\FFF[\symm_n]$ is semisimple,
the {\em Artin-Wedderburn Theorem} gives
an isomorphism of $\FFF$-algebras
\begin{equation}
\label{aw-isomorphism}
\Psi: \FFF[\symm_n] \xrightarrow{ \, \, \sim \, \, } \bigoplus_{\lambda \, \vdash \, n} \End_\FFF(V^{\lambda}).
\end{equation}
Given $a \in \FFF[\symm_n]$, the $\lambda^{th}$ component of $\Psi(a)$ acts on $V^{\lambda}$ by the $\FFF$-linear map $\Psi(a): v \mapsto a \cdot v$.

Returning to the statement of the theorem, since $\FFF[\symm_n]$ is semisimple, by induction on $k$ it suffices to establish the isomorphism
\begin{equation}
( \FFF[\xx_{n \times n}] / I_n )_{\leq k} \cong \bigoplus_{\substack{\lambda \, \vdash \, n \\ \lambda_1  \, \geq \, n-k}} \End_\FFF(V^{\lambda})
\end{equation}
in the category of {\em ungraded} $\symm_n \times \symm_n$-modules.
To this end,
Theorem~\ref{vector-space-structure} gives rise to the identifications
\begin{equation}
\label{big-chain}
\FFF[\symm_n] = \FFF[P_n] \cong \FFF[\xx_{n \times n}] / \II(P_n) \cong \FFF[\xx_{n \times n}] / \gr \, \II(P_n) = \FFF[\xx_{n \times n}] / I_n
\end{equation}
of ungraded $\symm_n \times \symm_n$-modules.
Let $L_k$ be the image of $\FFF[\xx_{n \times n}]_{\leq k}$ in $\FFF[\xx_{n \times n}] / \II(P_n)$, i.e.
\begin{equation}
L_k := \mathrm{Image}( \FFF[\xx_{n \times n}]_{\leq k} \hookrightarrow \FFF[\xx_{n \times n}] \twoheadrightarrow \FFF[\xx_{n \times n}] / \II(P_n) ).
\end{equation}
Lemma~\ref{filtration-lemma} implies that 
\begin{equation}
L_k = 
\spn_\FFF \{ m(\RRR) + \II(P_n)  \,:\, \text{$\RRR$ a rook placement with $|\RRR| \leq k$} \}.
\end{equation}
As explained in the proof of Theorem~\ref{local-basis-theorem}, under the correspondence $\FFF[\symm_n] = \FFF[P_n] \cong \FFF[\xx_{n \times n}]/\II(P_n)$
we have the identification
\begin{equation}
 \Loc_k(\symm_n,\FFF) = L_k
\end{equation}
with the $\symm_n \times \symm_n$-module of $k$-local statistics $\symm_n \rightarrow \FFF$.
Lemma~\ref{filtration-lemma}  and the chain \eqref{big-chain} of isomorphisms give rise to the further identification
\begin{equation}
 \Loc_k(\symm_n,\FFF) = L_k \cong ( \FFF[\xx_{n \times n}] / I_n )_{\leq k}
\end{equation}
of $\symm_n \times \symm_n$-modules.

By the last paragraph, we are reduced to establishing the isomorphism
\begin{equation}
\label{new-target-isomorphism}
\Loc_k(\symm_n,\FFF) \cong \bigoplus_{\substack{\lambda \, \vdash \, n \\ \lambda_1  \, \geq \, n-k}} \End_\FFF(V^{\lambda})
\end{equation}
of ungraded $\symm_n \times \symm_n$-modules.
Embed $\symm_{n-k} \subseteq \symm_n$ by acting on the first $n-k$ letters, let
\begin{equation}
\eta_{n-k} := \sum_{w \in \symm_{n-k}} w \in \FFF[\symm_n]
\end{equation}
be the group algebra element which symmetrizes over these letters, and let $J_k \subseteq \FFF[\symm_n]$ be the two-sided ideal generated by $\eta_{n-k}$.
The correspondence between functions $f: \symm_n \rightarrow \FFF$ and group algebra elements $\sum_{w \in \symm_n} f(w) \cdot w$ gives rise to an identification
\begin{equation}
\label{eq:local-is-j}
\Loc_k(\symm_n,\FFF) = J_k
\end{equation}
of ungraded $\symm_n \times \symm_n$-modules.
Indeed, the group algebra element $\eta_{n-k} \in \FFF[\symm_n]$ corresponds to the indicator permutation statistic $\one_{\RRR}: \symm_n \to \FFF$ indexed by the rook placement $$\RRR = \{ (n-k+1,n-k+1), \dots, (n-1,n-1), (n,n) \}.$$ 
Multiplying $\eta_{n-k}$ on the left and right by permutations of $\symm_n$ corresponds to interchanging rows and columns in the rook placement $\RRR_0$;  any rook placement with $k$ rooks may be obtained in this way.

Thanks to the identification \eqref{eq:local-is-j}, we are reduced to showing
\begin{equation}
\label{brand-newtarget-isomorphism}
J_k \cong \bigoplus_{\substack{\lambda \, \vdash \, n \\ \lambda_1  \, \geq \, n-k}} \End_\FFF(V^{\lambda})
\end{equation}
as $\symm_n \times \symm_n$-modules.
The image $\Psi(J_k)$ of the ideal $J_k \subseteq \FFF[\symm_n]$ under the 
Artin-Wedderburn isomorphism \eqref{aw-isomorphism} is an ideal in the direct sum
$\bigoplus_{\lambda \vdash n} \End_\FFF(V^{\lambda})$ of matrix rings.
Since each summand $\End_\FFF(V^{\lambda})$ is simple, there is a set $P(k)$ of partitions of $n$ such that 
\begin{equation}
\Psi(J_k) = \bigoplus_{\lambda \, \in \, P(k)} \End_\FFF(V^{\lambda}).
\end{equation}
The definitions of $\Psi$ and $J_k$ imply that
\begin{equation}
P(k) = \{ \lambda \vdash n \,:\, \eta_{n-k} \cdot V^{\lambda} \neq 0 \}.
\end{equation}

It remains to show that $P(k) = \{ \lambda \vdash n \,:\, \lambda_1 \geq n-k \}$.  To this end, observe that for any $\symm_n$-module $W$,
the image $\eta_{n-k} \cdot W$ may be characterized as the trivial component
\begin{equation}
\eta_{n-k} \cdot W = \left(  \Res^{\symm_n}_{\symm_{n-k}} W \right)^{\triv}
\end{equation}
of the restriction of $W$ from $\symm_n$ to $\symm_{n-k}$. In particular, for $\lambda \vdash n$ we have
\begin{equation}
\lambda \in P(k) \quad \Leftrightarrow \quad 
\left(  \Res^{\symm_n}_{\symm_{n-k}} V^\lambda \right)^{\triv} \neq 0.
\end{equation}
By the Branching Rule for symmetric group representations (see e.g. \cite[Thm. 2.8.3]{Sagan}),
 the restriction $ \Res^{\symm_n}_{\symm_{n-k}} V^\lambda$
has a nonzero trivial component if and only if $\lambda_1 \geq n-k$.
This proves the isomorphism \eqref{new-target-isomorphism} and the theorem.
\end{proof}

The ring $\FFF[\xx_{n \times n}]$ carries a natural involution $\sigma: x_{i,j} \mapsto x_{j,i}$ which transposes the matrix $\xx_{n \times n}$ of variables.
This induces a homogeneous involution on the quotient ring $\FFF[\xx_{n \times n}]/I_n$, also denoted $\sigma$.
The proof technique of Theorem~\ref{graded-module-structure} applies to show that in the isomorphism
\begin{equation}
( \FFF[\xx_{n \times n}] / I_n )_k \cong \bigoplus_{\substack{\lambda \, \vdash \, n \\ \lambda_1  \, = \, n-k}} V^{\lambda} \otimes V^{\lambda}
\end{equation}
of $\symm_n \times \symm_n$-modules, the action of $\sigma$ on the left hand side intertwines with the automorphism $(w,u) \mapsto (u,w)$
of the group $\symm_n \times \symm_n$.

Recall from the introduction that $\alpha_{n,k}$ is the character of $\symm_n$ given by
$\alpha_{n,k} = \sum_{\lambda_1 = k} f^{\lambda} \cdot \chi^\lambda$, where the sum is over partitions $\lambda \vdash n$
whose first row has length $k$.
As an immediate application of Theorem~\ref{graded-module-structure}, we get an explicit $\symm_n$-module with character $\alpha_{n,k}$.

\begin{corollary}
\label{novak-rhoades-corollary}
The class function $\alpha_{n,k}: \symm_n \rightarrow \FFF$ is the character of the restriction of the degree $n-k$ part of 
$\FFF[\xx_{n \times n}]/I_n$ to either factor of $\symm_n \times \symm_n$.  In symbols, we have
\begin{equation}
\alpha_{n,k} = 
\Res^{\symm_n \times \symm_n}_{\symm_n \times 1} \left(  \chi_{ (\FFF[\xx_{n \times n}]/I_n)_{n-k} }  \right) =
\Res^{\symm_n \times \symm_n}_{1 \times \symm_n} \left(  \chi_{ (\FFF[\xx_{n \times n}]/I_n)_{n-k} }  \right) ,
\end{equation}
where $\chi_V: \symm_n \times \symm_n \rightarrow \FFF$ denotes the character of an $\FFF[\symm_n \times \symm_n]$-module $V$.
\end{corollary}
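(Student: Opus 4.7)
The plan is to deduce this corollary as a direct consequence of Theorem~\ref{graded-module-structure}, which already pins down the bigraded $\symm_n \times \symm_n$-module structure on $\FFF[\xx_{n \times n}]/I_n$. The only remaining work is a bookkeeping calculation on characters after restriction.

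First I would specialize Theorem~\ref{graded-module-structure} to the degree $n-k$ piece, obtaining the isomorphism
\begin{equation}
( \FFF[\xx_{n \times n}] / I_n )_{n-k} \cong \bigoplus_{\substack{\lambda \, \vdash \, n \\ \lambda_1  \, = \, k}} V^{\lambda} \otimes V^{\lambda}
\end{equation}
of $\symm_n \times \symm_n$-modules. Next I would observe that if $W$ is any $\symm_n$-module and we view $W \otimes W$ as a $\symm_n \times \symm_n$-module via the external tensor product, then
\begin{equation*}
\Res^{\symm_n \times \symm_n}_{\symm_n \times 1} (W \otimes W) \cong W^{\oplus \dim W},
\end{equation*}
since the second factor acts trivially after restriction and $W$ is being tensored with a $(\dim W)$-dimensional trivial space. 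Applied to $W = V^\lambda$, this gives $f^\lambda$ copies of $V^\lambda$ as an $\symm_n$-module.

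Summing characters over $\lambda \vdash n$ with $\lambda_1 = k$, the character of the restriction to $\symm_n \times 1$ is precisely
\begin{equation*}
\sum_{\substack{\lambda \, \vdash \, n \\ \lambda_1  \, = \, k}} f^{\lambda} \cdot \chi^{\lambda} = \alpha_{n,k},
\end{equation*}
which is the first claimed equality. The second equality, after restriction to $1 \times \symm_n$, is obtained by the identical argument with the two tensor factors swapped; alternatively, one can invoke the involution $\sigma \colon x_{i,j} \mapsto x_{j,i}$ on $\FFF[\xx_{n \times n}]/I_n$ discussed just after Theorem~\ref{graded-module-structure}, which intertwines the two factor actions and therefore equates the two restricted characters.

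There is no real obstacle here: the whole content sits inside Theorem~\ref{graded-module-structure}, and the restriction step is a one-line computation using that external tensor products of irreducibles remain semisimple under restriction to a single factor. The only care needed is to match indices correctly (the degree $k$ piece corresponds to $\lambda_1 = n-k$ in Theorem~\ref{graded-module-structure}, so the degree $n-k$ piece corresponds to $\lambda_1 = k$, matching the defining sum for $\alpha_{n,k}$).
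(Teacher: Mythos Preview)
Your proposal is correct and matches the paper's approach: the paper states this corollary as an immediate application of Theorem~\ref{graded-module-structure} without providing further detail, and your restriction calculation is exactly the bookkeeping that makes ``immediate'' precise.
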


The space $(\FFF[\xx_{n \times n}]/I_n)_{n-k}$ is the cleanest representation-theoretic model for $\alpha_{n,k}$ known to the author.
There is another model for $\alpha_{n,k}$ involving quotient spaces.
For any $d$, we have an action of $\symm_n$ on $\Loc_d(\symm_n,\FFF)$ given by $(w \cdot f)(v) := f(w^{-1} v)$ for $w, v \in \symm_n$ and
$f \in \Loc_d(\symm_n,\FFF)$.  
The isomorphism \eqref{new-target-isomorphism} implies that 
the sum $\alpha_{n,k} + \alpha_{n,k+1} + \cdots + \alpha_{n,n}$ is the character of $\Loc_{n-k}(\symm_n,\FFF)$.
Therefore, the quotient module $\Loc_{n-k}(\symm_n,\FFF)/\Loc_{n-k-1}(\symm_n,\FFF)$ has character $\alpha_{n,k}$.

Sums of the characters $\alpha_{n,k}$ also arise in the context of Schur-Weyl duality.
Let $\FFF = \CC$, let $V = \CC^d$, and let $V^{\otimes n} = V \otimes \cdots \otimes V$ be the $n$-fold tensor power of $V$.
The vector space $V^{\otimes n}$ carries a diagonal action of $GL(V)$, viz.
\begin{equation}
g \cdot (v_1 \otimes \cdots \otimes v_n) := (g \cdot v_1) \otimes \cdots \otimes (g \cdot v_n) \quad \quad (g \in GL(V), \, \, v_1, \dots, v_n \in V).
\end{equation}
Let $\End_{GL(V)}(V^{\otimes n})$ be the algebra of linear maps $\varphi: V^{\otimes n} \rightarrow V^{\otimes n}$ which commute with the action
of $GL(V)$.  We have an algebra homomorphism $\Phi: \CC[\symm_n] \rightarrow \End_{GL(V)}(V^{\otimes n})$ induced by
\begin{equation}
\Phi(w) \cdot (v_1 \otimes \cdots \otimes v_n) := v_{w^{-1}(1)} \otimes \cdots \otimes v_{w^{-1}(n)} \quad \quad (w \in \symm_n, \, \, v_1, \dots, v_n \in V).
\end{equation}
Schur-Weyl duality asserts that the homomorphism $\Phi$ is surjective, but when $d < n$ the kernel of $\Phi$ is nonzero.
In fact, the character of the $\symm_n$-module $\End_{GL(V)}(V^{\otimes n})$ is given by 
\begin{equation}
\chi_{\End_{GL(V)}(V^{\otimes n})} = \sign \otimes (\alpha_{n,1} + \alpha_{n,2} + \cdots + \alpha_{n,d})
\end{equation}
where $\sign$ is the degree 1 sign character. In other words, we have 
$\chi_{\End_{GL(V)}(V^{\otimes n})} = \sum_{\lambda'_1 \leq d} f^{\lambda} \cdot \chi^{\lambda}$ where the sum is over partitions $\lambda \vdash n$
whose first column has length at most $d$.
By Corollary~\ref{novak-rhoades-corollary}, we have an isomorphism of $\symm_n$-modules
\begin{equation}
\End_{GL(V)}(V^{\otimes n}) \cong_{\symm_n} \sign \otimes \bigoplus_{k \, \geq \, n-d} (\CC[\xx_{n \times n}]/I_n)_k.
\end{equation}
It may be interesting to give a formula for this isomorphism.

 By Corollary~\ref{novak-rhoades-corollary}, finding an explicit family of linear injections
\begin{equation}
(\FFF[\xx_{n \times n}]/I_n)_{d-1} \otimes 
(\FFF[\xx_{n \times n}]/I_n)_{d+1}  \hookrightarrow 
(\FFF[\xx_{n \times n}]/I_n)_{d} \otimes
(\FFF[\xx_{n \times n}]/I_n)_{d} \quad \quad (0 < d < n-1)
\end{equation}
which commute with either the row or column action of $\symm_n$ on $\xx_{n \times n}$ would prove the Novak-Rhoades conjecture \cite{NR} and 
imply Chen's conjecture \cite{Chen}.
In fact, computations suggest that such an injection can be found which commutes with both row and column permutation.

\begin{conjecture}
\label{equivariant-conjecture}
Given any degree $d \geq 0$, let $\symm_n \times \symm_n$ act on $ (\FFF[\xx_{n \times n}]/I_n)_d$ by independent row and column permutation.
For all $0 < d < n-1$ there exists a 
linear injection
\begin{equation*}
\varphi: 
(\FFF[\xx_{n \times n}]/I_n)_{d-1} \otimes 
(\FFF[\xx_{n \times n}]/I_n)_{d+1}  \hookrightarrow 
(\FFF[\xx_{n \times n}]/I_n)_{d} \otimes
(\FFF[\xx_{n \times n}]/I_n)_{d}
 \end{equation*}
 which commutes with the diagonal action of $\symm_n \times \symm_n$ defined  by
 \begin{equation*}
 (w, v) \cdot (f \otimes g) := ((w,v) \cdot f) \otimes ((w,v) \cdot g)
 \end{equation*}
 for $(w,v) \in \symm_n \times \symm_n$ and $f, g \in \FFF[\xx_{n \times n}]/I_n$.
\end{conjecture}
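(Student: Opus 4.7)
The plan is to apply Theorem~\ref{graded-module-structure} to convert the conjecture into an explicit inequality on bilinear sums of Kronecker coefficients, and then to search for a natural construction of the injection using either the multiplicative structure of $\FFF[\xx_{n \times n}]/I_n$ or its Viennot shadow monomial basis.

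First I would use Theorem~\ref{graded-module-structure} together with the fact that the two factors of $\symm_n \times \symm_n$ act on the two respective tensorands of each summand $V^\lambda \otimes V^\lambda$. A reordering of the four tensor factors then yields the isomorphism
\begin{equation*}
(\FFF[\xx_{n \times n}]/I_n)_a \otimes (\FFF[\xx_{n \times n}]/I_n)_b \cong \bigoplus_{\substack{\lambda \, \vdash \, n, \, \lambda_1 \, = \, n-a \\ \mu \, \vdash \, n, \, \mu_1 \, = \, n-b}} (V^\lambda \otimes V^\mu) \boxtimes (V^\lambda \otimes V^\mu)
\end{equation*}
of $\symm_n \times \symm_n$-modules, where $\boxtimes$ denotes the outer product and $\otimes$ on the right-hand side is the Kronecker product of $\symm_n$-modules. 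Expanding each Kronecker factor $V^\lambda \otimes V^\mu = \bigoplus_\nu g^\nu_{\lambda\mu} V^\nu$ into irreducibles, the existence of an equivariant injection is equivalent to the family of Kronecker-coefficient inequalities
\begin{equation*}
\sum_{\substack{\lambda_1 \, = \, n-d+1 \\ \mu_1 \, = \, n-d-1}} g^\alpha_{\lambda\mu} \, g^\beta_{\lambda\mu} \;\leq\; \sum_{\substack{\lambda_1 \, = \, n-d \\ \mu_1 \, = \, n-d}} g^\alpha_{\lambda\mu} \, g^\beta_{\lambda\mu}
\end{equation*}
holding for every pair of partitions $\alpha, \beta \vdash n$ and every $0 < d < n-1$.

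With this reformulation in hand, I would attempt to build the injection in a structural way rather than verify the multiplicity inequality one irreducible at a time, since an explicit map is more likely to shed light on the original Novak-Rhoades conjecture. Two natural candidates present themselves. The first is a ``split-then-multiply" operator: an $\symm_n \times \symm_n$-equivariant map $R_{d-1} \otimes R_{d+1} \to R_d \otimes R_d$ manufactured from the commutative multiplication on $\FFF[\xx_{n \times n}]/I_n$ together with a coproduct coming from the Artin-Wedderburn identification with $\FFF[\symm_n]$ used in the proof of Theorem~\ref{graded-module-structure}. The second is to work directly on the shadow monomial basis of Theorem~\ref{vector-space-structure}: since $\deg \sss(w) = n - \lis(w)$, one wants an injective assignment of pairs $(w, w')$ with $(\lis(w), \lis(w')) = (n-d+1, n-d-1)$ to linear combinations of pairs $(u, u')$ with $\lis(u) = \lis(u') = n-d$, obtained perhaps by a local modification transferring one position of the longest increasing subsequence of $w$ into that of $w'$; equivariance would then have to be either built in by the construction or imposed by symmetrization.

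The main obstacle is that the non-equivariant shadow of the conjecture already implies Chen's log-concavity conjecture for the sequence $(a_{n,k})$, which is itself open. The Kronecker inequality displayed above is strictly stronger, and Kronecker coefficients are notorious for their lack of a manageable combinatorial rule, so a purely character-theoretic attack seems unpromising. My best hope would be to exploit the concrete geometry afforded by Theorem~\ref{vector-space-structure}---the Gr\"obner basis of $I_n$ and the Viennot shadow line construction---to produce a combinatorial injection whose $\symm_n \times \symm_n$-equivariance can be verified directly. Designing such a construction and proving simultaneously that it is injective and equivariant is, by my reckoning, the hard part.
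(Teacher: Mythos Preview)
The statement you are addressing is labeled a \emph{Conjecture} in the paper, and the paper does not supply a proof. The only evidence the paper offers is the remark immediately following the conjecture that the existence of such an equivariant injection has been verified computationally for $n \leq 15$. There is therefore no proof in the paper against which to compare your proposal.

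Your reformulation in terms of Kronecker coefficients is correct: using Theorem~\ref{graded-module-structure} and regrouping tensor factors, the conjecture is equivalent to the inequalities
\[
\sum_{\substack{\lambda_1 = n-d+1 \\ \mu_1 = n-d-1}} g^{\alpha}_{\lambda\mu}\, g^{\beta}_{\lambda\mu} \;\le\; \sum_{\substack{\lambda_1 = n-d \\ \mu_1 = n-d}} g^{\alpha}_{\lambda\mu}\, g^{\beta}_{\lambda\mu}
\]
for all $\alpha,\beta \vdash n$ and $0 < d < n-1$. You also correctly observe that even the non-equivariant version (taking dimensions) is Chen's log-concavity conjecture, which remains open. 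So your proposal is not a proof but a research outline, and you say as much. That is the appropriate assessment: the conjecture is genuinely open, the paper does not claim otherwise, and the two constructive strategies you sketch (a multiplication/comultiplication map, or a combinatorial injection on pairs of shadow monomials followed by symmetrization) are reasonable directions but neither is known to succeed.
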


Conjecture~\ref{equivariant-conjecture} would imply both the Novak-Rhoades conjecture \cite{NR} and Chen's conjecture \cite{Chen}.
The existence of a map $\varphi$ as in Conjecture~\ref{equivariant-conjecture} has been checked for $n \leq 15$.

\section{Conclusion}
\label{Conclusion}

This paper established a connection between the algebra of $\FFF[\xx_{n \times n}]/I_n$ and the combinatorics of $\symm_n$.
It may be interesting to find analogous results for other combinatorial structures.
As motivation,
 B\'ona, Lackner, and Sagan \cite{BLS} conjectured that the sequence $(i_{n,1}, \dots, i_{n,k})$ given by 
 \begin{equation}
 i_{n,k} = | \{ w \in \symm_n \,:\, \lis(w) = k, \, \, w^2 = 1 \} |
 \end{equation}
 which counts {\em involutions} in $\symm_n$ with longest increasing subsequence of length $k$ is log-concave.
 Novak and the author made (unpublished) the stronger conjecture \cite{NR} that the sequence 
 $(\iota_{n,1}, \dots, \iota_{n,n})$ of characters
 \begin{equation}
 \iota_{n,k} := \sum_{\substack{\lambda \, \vdash \, n \\ \lambda_1 \,  = \, k }} \chi^{\lambda}
 \end{equation}
 is log-concave with respect to the Kronecker product (where a class function is `non-negative' if it is a genuine character).
On the commutative algebra side, adding the differences $x_{i,j} - x_{j,i}$ to the ideal $I_n \subseteq \FFF[\xx_{n \times n}]$ gives a candidate quotient
ring which could be used to study these conjectures.

 A key tool for understanding the structure of $\FFF[\xx_{n \times n}]/I_n$ was the orbit harmonics method applied to the locus $P_n \subseteq \FFF^{n \times n}$
 of permutation matrices; it was proven that $I_n = \gr \, \II(P_n)$.  It may be interesting to compute $\gr \, \II(M_n)$ for other matrix loci
 $M_n \subseteq \FFF^{n \times n}$. Four suggestions in this direction are as follows.
 \begin{enumerate}
 \item  The set $M_n = \III_n$ of symmetric permutation matrices corresponding to involutions in $\symm_n$. The ideal $\gr \, \II(\III_n)$ could have application
 to the B\'ona-Lackner-Sagan conjecture \cite{BLS} and the Kronecker log-concavity of the character sequence $(\iota_{n,1}, \dots , \iota_{n,n})$.
 \item  The set $M_n = G$ of elements of a complex reflection group. The Hilbert series of 
 $\FFF[\xx_{n \times n}]/\gr \, \II(G)$ should be generating functions for a `longest increasing subsequence' statistic on $G$.\footnote{While this paper was under review, M. J. Liu \cite{Liu} solved this problem for the wreath product groups $G = \ZZ_r \wr \symm_n$.}
 \item The set $M_n = A_n$ of $n \times n$ alternating sign matrices. A standard monomial basis of 
  $\FFF[\xx_{n \times n}]/\gr \, \II(A_n)$ could give a clues about a Schensted correspondence for ASMs.
 \end{enumerate}
It may also be interesting to consider loci of rectangular $m \times n$ matrices for which $m \neq n$. For example, fixing sequences $\lambda = (\lambda_1, \dots, \lambda_n)$ and $\mu = (\mu_1, \dots, \mu_n)$, one could consider the {\em contingency table} locus of $\ZZ_{\geq 0}$-matrices with column sums $\lambda$ and row sums $\mu$. Fulton's {\em matrix-ball construction} \cite{Fulton} generalizes  Viennot shadow lines from permutation matrices to contingency tables; perhaps the matrix-ball construction is also related to standard monomial theory.

The genesis of this paper was an email from Pierre Briaud and
Morten {\O}ygarden to the author regarding a problem in cryptography.
We close by describing this problem and its relationship to our work.

Let $q$ be a prime power and let $\FFF_q$ be the finite field with $q$ elements. Given a known matrix $\AAA \in \FFF_q^{m \times n}$ 
and a known vector $\vv \in \FFF_q^n$, the 
{\em Permuted Kernel Problem} \cite[Def. 1]{BFKMPP} seeks to recover an unknown permutation $w \in \symm_n$
of the coordinates of $\vv$ which lies in the right kernel of $\AAA$.
The parameters $q, m,$ and $n$ are chosen so that $n! \approx q^m$ and there exists a unique such $w \in \symm_n$ with high probability.
The PKP amounts to solving a polynomial system in the $n^2$ variables $\xx_{n \times n}$ over the field $\FFF_q$ consisting of 
\begin{enumerate}
\item the polynomials which express $\xx_{n \times n}$ as a permutation matrix, and
\item the $m$ polynomials coming from the vector equation $\AAA  \cdot \xx_{n \times n} \cdot \vv = \zero$.
\end{enumerate}
In cryptography, one wants to know the difficulty in solving this system using  Gr\"obner methods.\footnote{In cryptography, one also often works 
over fields of low characteristic, including characteristic 2. This is one reason why we remained as agnostic as possible  about our choice of field.}
This paper analyzed the system of polynomials coming from $(1)$ alone; we hope that this will lead to a better understanding of the more cryptographically relevant
system
$(1) \cup (2)$.
The Hilbert series of a quotient similar to that by $(1) \cup (2)$ was studied by Briaud and {\O}ygarden in \cite{BO} when the linear system analogous to (2)
is sufficiently generic.

\section{Acknowledgements}

The author is very thankful to Pierre Briaud and Morten {\O}ygarden for sharing their conjecture on the Hilbert series of 
$\FFF[\xx_{n \times n}] / \gr \, \II(P_n)$ and for many very helpful email interactions throughout this project
(and their comments on and corrections to earlier versions of this manuscript).
The author is also grateful to Dennis Stanton and Jessica Striker for helpful conversations.
The author thanks four anonymous referees for their careful reading of and helpful comments on this paper.
The author was partially supported by NSF Grants DMS-1953781 and DMS-2246846.

\end{document}